\date{}
\keywords{Hilbert $C^*$-module, measures of noncompactness,  uniform structure.}
\subjclass{Primary: 46L08, 47H08; Secondary: 54E15}
\newcommand{\N}{\mathbf N}
\newcommand{\skp}[2]{\left<#1,#2\right>}
\renewcommand{\a}{\alpha}
\newcommand{\mA}{\mathcal A}
\newcommand{\va}{\varphi}
\newtheorem{theorem}{Theorem}[section]
\newtheorem{proposition}[theorem]{Proposition}
\newtheorem{lemma}[theorem]{Lemma}
\newtheorem{corollary}[theorem]{Corollary}
\theoremstyle{definition}
\newtheorem{definition}[theorem]{Definition}
\newtheorem{example}[theorem]{Example}
\theoremstyle{remark}
\newtheorem{remark}[theorem]{Remark}
\numberwithin{equation}{section}
\title[Measures of noncompactness in Hilbert $C^*$-modules]{Measures of noncompactness in Hilbert $C^*$-modules}
\author[D. Ke\v{c}ki\'c]{Dragoljub J.\ Ke\v{c}ki\'c}
\address{Faculty of Mathematics University of Belgrade 11000 Belgrade  Serbia}
\email{keckic@matf.bg.ac.rs}
\author[Z.\ Lazovi\'c]{Zlatko Lazovi\'c}
\address{Faculty of Mathematics University of Belgrade 11000 Belgrade  Serbia}
\email{zlatkol@matf.bg.ac.rs}
\begin{document}

\begin{abstract}
Consider a countably generated Hilbert $C^*$-module $\mathcal M$ over a $C^*$-algebra $\mA$. There is a measure of noncompactness $\lambda$ defined, roughly as the distance from finitely generated projective submodules, which is independent of any topology. We compare $\lambda$ to the Hausdorff measure of noncompactness with respect to the family of seminorms that induce a topology recently iontroduced by Troitsky, denoted by $\chi^*$. We obtain $\lambda\equiv\chi^*$. Related inequalities involving other known measures of noncompactness, e.g.\ Kuratowski and Istr\u{a}\c{t}escu are laso obtained as well as some related results on adjontable operators.
\end{abstract}

\maketitle

\section{Introduction}

Hilbert modules, i.e.\ modules over some $C^*$-algebra have been studied intensively for several decades. Briefly, they are counterparts of Hilbert spaces where the field $\mathbb C$ of complex scalars is replaced by some $C^*$-algebra.

\begin{definition} Let $\mA$ be some $C^*$-algebra. A pre-Hilbert $C^*$-module is a (right) $C^*$-module $\mathcal M$ equipped with a sesquilinear form: $\mathcal M\times\mathcal M\to\mA$  with the following properties:
\begin{itemize}
\item[i)] $\skp{x}{x}\geq0$ for each $x\in\mathcal M$;
\item[ii)] $\skp{x}{x}=0$ implies that $x = 0$;
\item[iii)] $\skp{x}{y}=\skp{y}{x}^*$ for each  $x,y\in\mathcal M$;
\item[iv)] $\skp{x}{ya}=\skp{x}{y}a$ for any $x,y\in\mathcal M$ and any $a\in\mA.$
\end{itemize}

A pre-Hilbert $C^*$-module over $\mA$ is a Hilbert $C^*$-module if it is complete with respect to the norm $\|x\|=\|\skp{x}{x}\|^{\frac{1}{2}}$.
\end{definition}

A Cauchy-Schwartz type inequality holds for Hilbert $C^*$-modules (\cite[Proposition 1.2.4]{Paschke})
$$\skp{x}{y}\skp{y}{x}\leq\|y\|^2\skp{x}{x}\quad\mbox{for each }x,y\in N.$$

\begin{example}\label{PrimerStandardni}

Given a unital $C^*$-algebra $\mathcal{A}$, the standard Hilbert module $H_{\mathcal{A}}$ is defined as
$$H_{\mathcal{A}}=\{x=(\xi_1,\xi_2,\ldots)\,\,|\,\,\xi_j\in\mathcal{A},\,\,\sum_{j=1}^{\infty}\xi_j^*\xi_j\,\,\mbox{converges in the norm topology}\}$$
with the $\mathcal{A}$-valued inner product
$$\skp{x}{y}=\sum_{j=1}^{\infty}\xi_j^*\eta_j,\quad x=(\xi_1,\xi_2,\ldots), y=(\eta_1,\eta_2,\ldots)\in H_{\mathcal{A}}.$$
\end{example}

Unlike Hilbert spaces, an $\mA$-linear bounded operator on a Hilbert module need not have an adjoint. Therefore, it is usual to consider $B^{a}(M)$ the set of all bounded, adjointable, $\mA$-linear operators on a Hilbert module $\mathcal M$.

Among them, $\mA$-linear combinations of
$$\Theta_{y,z}:\mathcal M\to\mathcal N, y\in \mathcal N,z\in\mathcal M,\qquad \Theta_{y,z}(x)=y\skp{z}{x}$$
are called finite rank operators. Those belonging to the norm closure of the set of finite rank operators are called "compact" or $\mA$-compact. In general, $\mA$-compact operators need not map bounded sets into relatively compact sets, as is the case in the framework of Hilbert (and also Banach) spaces, though they share many properties of proper compact operators on a Hilbert space (see \cite{Manuilov2}, \cite{Manuilov3}). Hence, "compact". Indeed, infinite dimensionality of the image of some bounded set is contained in the algebra of scalars.

For instance, $P_n:H_\mA\to H_\mA$ given by
\begin{equation}\label{PeeN}
P_n(\xi_1,\xi_2,\dots)=(\xi_1,\xi_2,\dots,\xi_n,0,0,\dots),
\end{equation}
is $\mA$-compact. Indeed, if $\mA$ is unital it can be written as $P_n=\sum_{k=1}^n\Theta_{e_k,e_k}$, where $e_k$ is the basic vector consisting of zeros except on $k$th place it has the unity of $\mA$. If $\mA$ is not ubnital, then $P_n$ can be obtained as the limit of $\sum_{k=1}^n\Theta_{e_k^\alpha,e_k^\alpha}$, where $e_k^\alpha$ has $e^\alpha$ the approximate identity on its $k$th coordinate. However the image of the unit ball in $H_\mA$ is the unit ball in $\mA^n$ which is obviously not compact, unless $\mA$ is finite dimensional itself.

For general literature concerning Hilbert modules over $C^*$-algebras, including the standard Hilbert module, the reader is referred to \cite{Lance} or \cite{Manuilov}.

In \cite{Keckic1}, the autors pose a question whether there exists a topology on Hilbert module $\mathcal N$,  where $\mA$ is a unital $C^*$-algebra, such that each operator on $M$ is $\mA$-compact iff it maps a unit ball (in the norm) to a totally bounded set, and gave a partial answer for the standard Hilbert module, $H_{\mA}$ (Example \ref{PrimerStandardni}), by constructing the appropriate topology, here denoted by $\tau_3$

Soon after, in \cite{Troi}, Troitsky gave a complete answer in the case of countable generated Hilbert $C^*$-modules, constructing more suitable topology, her denoted by $\tau$.

Both topologies were constructed via a family of seminorms, converting the underlying Hilbert module in a uniform sapce. This allows to consider measures of noncompactness that arise from some family of seminorms, in particular  Hausdorff, Kuratowski and Istr\u{a}\c{t}escu measure of noncompactness, see \cite{Marina} and \cite{Arandjelovic1}.

Despite of any uniform structure, there is a natural distance of a given set from finite rank sets, based on \cite[Proposition 2.6.2]{Manuilov} and the definition preceding it. It was introduced in \cite{Keckic2}, as follows:

\begin{definition}\label{DefLambda}
Let $E\subset H_{\mA}$ be a bounded set. The measure of noncompactness of $E$, denoted by $\lambda(E)$, is the greatest lower bound of all $\eta>0$ for which there exists a free finitely generated module $L\leq H_{\mA}$ such that
$$d(E,L):=\sup_{x\in E}\inf_{y\in L}\|x-y\|<\eta.$$
\end{definition}

In \cite{Keckic2} there were considered Hausdorff, Kuratowski and Istr\u{a}\c{t}escu measures of noncompactness, with respect to topology $\tau_3$ (or more precisely, the family of seminorms defining it), denoted by $\chi^*$, $\alpha^*$ and $I^*$, respectively. Besides the expected inequalities
\begin{equation}\label{UsualIneq}
\chi^*(E)\le I^*(E)\le\alpha^*(E)\le 2\chi^*(E),
\end{equation}
for any bounded subset $E$ of the standard Hilbert module $H_\mA$, the following relationships with the measure $\lambda$ was obtained. For an arbitrary bounded $E\subseteq H_\mA$ there holds
\begin{equation}\label{InequalitiesTau3}
    \chi^*(E)\le\lambda(E)\le\sqrt{\|E\|I^*(E)},\qquad\|E\|=\sup_{x\in E}\|x\|.
\end{equation}
The first inequality is proved for an arbitrary $C^*$-algebra $\mA$, whereas the second holds if $\mA=B(H)$.

The aim of this note is to examine an arbitrary countably generated Hilbert module $\mathcal M$ and the corresponding measures of noncompactness related to the topology $\tau$ defined by Troit\/sky in \cite{Troi}, i.e\ from the family of seminorms that define it, as well as their relationship to the measure $\lambda$.

In addition to the expected inequalities (\ref{UsualIneq}) we prove
$$\lambda(E)=\chi^*(E),$$
for all countably generated Hilbert modules over an arbitrary $C^*$-algebra, which is much stronger result then (\ref{InequalitiesTau3}). Some related results for adjointable operators are also given.

\section{Prerequisits}

In this section we list known results and definitions necessary for the main result.

\subsection{Measure of noncompactness on uniform spaces}

Uniform spaces are usually defined as spaces endowed with a family of sets in $X\times X$ given as some kind of neighborhoods of the diagonal, or so-called entourages \cite[p.\ 169]{Nikolas}, for our purposes it is more convenient to give an equivalent definition via a family of semimetrics.

\begin{definition} A nonempty set $X$ endowed with a family of pseudometrics $\{d_{\alpha}\}$ (functions $d_{\alpha}\colon X\times X\to[0,+\infty)$ satisfying $(i)\,\, d_{\alpha}(x,x)=0; (ii)\,\,$ $d_{\alpha}(x,y)=d_\alpha(y,x)$; $(iii)\,\,d_{\alpha}(x,y)\leq d_{\alpha}(x,z)+d_{\alpha}(z,y)$ for each $x,y,z\in X$) is called a {\it uniform space}.
\end{definition}

By \cite[Theorem 1, p.142]{Nikolas2} this definition is equivalent to the usual definition via entourages.

The family of sets $B_{d_{\alpha}}(x;\varepsilon)=\{y\in X\,\,|\,\,d_{\alpha}(x,y)<\varepsilon\}$ forms a basis for the corresponding topology.

\begin{definition}
    A set $E\subset X$ is {\it totally bounded} if for every $d_{\alpha}$ and every $\varepsilon>0$ there exists a finite collection $y_1,y_2,\ldots,y_n$ of elements of $E$ such that the sets $B_{\alpha}(y_j;\varepsilon)=\{x\in E\,\,|\,\,d_{\alpha}(x,y_j)<\varepsilon\},\, j=1,2,\ldots n$ form a cover of $E$.
\end{definition}

\begin{remark}
    In the usual framework, where uniform space is defined by entourages, there is also another different definition of totally bounded sets. These two definitions are equivalent, see \cite[page 198]{Kelly}
\end{remark}

It is well known that every locally convex topological vector space is a uniform space.

Three most common measures of noncompactness on metric spaces are Hausdorff measure $\chi$, Kuratowski measure $\alpha$ and Istr\u{a}\c{t}escu measure $I$. Nothing is lost if we replace metric by some pseudometric. Hence, for a given pseudometric $d$ we have
$$\chi_d(E)=\inf\{\varepsilon>0\mid E\subseteq\bigcup_{j=1}^m B_d(x_j,\varepsilon),\;\mbox{for some}\;x_j\in X\}.$$
$$\alpha_d(E)=\inf\{\varepsilon>0\mid E=\bigcup_{j=1}^{m}E_j,\;\mbox{for some}\;E_j,\;\mathrm{diam}(E_j)<\varepsilon\},$$
$$I_d(E)=\sup\{\varepsilon>0\mid\exists\mbox{ infinite}\;S\subset E\;\mbox{so that}\;\forall\,x_m\neq x_n\in S,d(x_n,x_m)\geq\varepsilon\,\}.$$

The following properties of $\chi$, $\alpha$ and $I$ are well known. Their proof can be found in \cite{Rakoc} and \cite{Arandjelovic2}.

\begin{proposition}\label{osobine MNC} Let $E$, $E_1$, $E_2$ be bounded subsets of some metric space $X$, and let $\mu$ denote either $\chi$ or $\alpha$ or $I$. Then
\begin{enumerate}
    \item if $E_1\subseteq E_2$ then $\mu(E_1)\le\mu(E_2)$;
    \item $\mu(E_1+ E_2)\leq \mu(E_1)+\mu(E_2)$, provided that $X$ is a Banach space;
    \item\label{KnownIneq} $\chi(E)\leq I(E)\leq\alpha(E)\leq2\chi(E)$.
\end{enumerate}
\end{proposition}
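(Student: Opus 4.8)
The plan is to verify each property directly from the definitions, handling $\chi$ and $\alpha$ together wherever possible and isolating the extra work needed for the separation measure $I$. For property (1), monotonicity is immediate: a finite cover of $E_2$ by $\varepsilon$-balls also covers $E_1\subseteq E_2$, so $\chi(E_1)\le\chi(E_2)$; intersecting a finite partition of $E_2$ into sets of diameter $<\varepsilon$ with $E_1$ produces such a partition of $E_1$, so $\alpha(E_1)\le\alpha(E_2)$; and any infinite $\varepsilon$-separated subset of $E_1$ is also one of $E_2$, so $I(E_1)\le I(E_2)$. For property (2) in the cases $\mu=\chi$ and $\mu=\alpha$ I would exploit the Minkowski-sum structure: if $E_1\subseteq\bigcup_i B_d(x_i,\varepsilon_1)$ and $E_2\subseteq\bigcup_j B_d(y_j,\varepsilon_2)$, then $E_1+E_2\subseteq\bigcup_{i,j}B_d(x_i+y_j,\varepsilon_1+\varepsilon_2)$; similarly, if $E_1=\bigcup_i A_i$ and $E_2=\bigcup_j B_j$ with small diameters, then $E_1+E_2=\bigcup_{i,j}(A_i+B_j)$ with $\mathrm{diam}(A_i+B_j)\le\mathrm{diam}(A_i)+\mathrm{diam}(B_j)$. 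Passing to the infimum in each case yields subadditivity.

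For the chain in property (3) I would argue as follows. To get $\chi(E)\le I(E)$, fix $\varepsilon>I(E)$ and choose (by Zorn's lemma) a maximal $\varepsilon$-separated subset of $E$; by the definition of $I$ it must be finite, say $\{x_1,\dots,x_n\}$, and maximality forces $E\subseteq\bigcup_k B_d(x_k,\varepsilon)$, whence $\chi(E)\le\varepsilon$. For $I(E)\le\alpha(E)$, fix $\varepsilon>\alpha(E)$ and a partition $E=\bigcup_{j=1}^m E_j$ with $\mathrm{diam}(E_j)<\varepsilon$; any infinite $\varepsilon$-separated set would, by the pigeonhole principle, place two distinct points into a common $E_j$, contradicting $\mathrm{diam}(E_j)<\varepsilon$, so $I(E)\le\varepsilon$. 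Finally, for $\alpha(E)\le 2\chi(E)$, fix $\varepsilon>\chi(E)$ and a cover $E\subseteq\bigcup_{j=1}^m B_d(x_j,\varepsilon)$; the sets $E\cap B_d(x_j,\varepsilon)$ partition $E$ into pieces of diameter at most $2\varepsilon$, so $\alpha(E)\le 2\varepsilon$. Letting $\varepsilon$ decrease to the relevant measure gives all three inequalities.

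The one genuinely nontrivial step is subadditivity of the Istr\u{a}\c{t}escu measure, $I(E_1+E_2)\le I(E_1)+I(E_2)$, where the Minkowski trick fails because bounded sets need not be totally bounded. Here I would argue by contradiction using the infinite Ramsey theorem. Suppose $S\subseteq E_1+E_2$ is infinite and $r$-separated with $r>I(E_1)+I(E_2)$, and fix $\varepsilon>0$ so small that $r>(I(E_1)+\varepsilon)+(I(E_2)+\varepsilon)$. Write each $s\in S$ as $s=f(s)+g(s)$ with $f(s)\in E_1$ and $g(s)\in E_2$. Two-colour the pairs $\{s,t\}$ according to whether $d(f(s),f(t))>I(E_1)+\varepsilon$; a monochromatic infinite set in the first colour would give an infinite $(I(E_1)+\varepsilon)$-separated subset of $E_1$, which is impossible by the definition of $I(E_1)$, so Ramsey produces an infinite $S'\subseteq S$ on which $d(f(s),f(t))\le I(E_1)+\varepsilon$. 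Repeating the argument on $S'$ with $g$ in place of $f$ yields an infinite $S''\subseteq S'$ on which $d(g(s),g(t))\le I(E_2)+\varepsilon$. But then the triangle inequality gives $d(s,t)\le d(f(s),f(t))+d(g(s),g(t))\le(I(E_1)+\varepsilon)+(I(E_2)+\varepsilon)<r$ for all distinct $s,t\in S''\subseteq S$, contradicting $r$-separation. Hence no such $r$ exists and subadditivity follows.

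This double Ramsey extraction is the step I expect to be the main obstacle; the remaining assertions are a direct unwinding of the definitions. Everything here is elementary and metric in nature, so the whole statement transfers verbatim to an arbitrary pseudometric $d$, which is all that is needed for the uniform-space setting used later.
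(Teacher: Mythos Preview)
Your proof is correct. Note, however, that the paper does not actually prove this proposition: it states that the properties are well known and refers the reader to \cite{Rakoc} and \cite{Arandjelovic2}. So you are supplying considerably more than the paper does.

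Your arguments for monotonicity, for subadditivity of $\chi$ and $\alpha$ via the Minkowski-sum trick, and for the chain $\chi\le I\le\alpha\le 2\chi$ are the standard ones and coincide with what appears in the cited references. The only place requiring genuine thought is the subadditivity of $I$, and your double Ramsey extraction is a valid and clean way to handle it: the key observation that an infinite monochromatic set in the ``far'' colour would force an infinite $(I(E_j)+\varepsilon)$-separated subset of $E_j$ (with $f$, respectively $g$, automatically injective there since the pairwise distances are positive) is exactly what is needed. One cosmetic remark: in the step $\alpha(E)\le 2\chi(E)$ the sets $E\cap B_d(x_j,\varepsilon)$ form a cover rather than a partition, but the definition of $\alpha$ only requires a finite cover by sets of small diameter, so the argument stands.
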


Thus, on a uniform space we have a family of measures of noncompactness, one for each pseudometric. Sadovskii \cite{Sadovski} considered them as functions that maps bounded (with respect to all pseudometrics) sets into functions which domain is the set of all pseudometrics. In other words he put $[\alpha(E)](d):=\alpha_d(E)$, $[\chi(E)](d)=\chi_d(E)$. A similar can be done for Istr\u{a}\c{t}escu measure $I$.

Instead of the family of measures of noncompactness, one for each pseudometric, it is possible to consider a single measure of noncompactness, as it was done in  \cite{Arandjelovic1} and \cite{Marina}, in the following way. To each pseudometric $d_i$ assign a function $\mu_i$ which satisfies some list of axioms ($\chi$, $\alpha$ and $I$ fulfill all of them). Such a function is called just measure of noncompactness, or shortly MNC. Then it is possible to take their supremum as the measure of noncompactness. Note, however, that such defined measure of noncompactness highly depend on the family of pseudometrics and can differ even if two families determine the same uniformity.

\begin{definition}
Let $X$ be a uniform space and let $\{d_i\,|\,i\in J\}$ be a family of pseudometrics which defines topology on
$X$. Denote by $\mu_i$ an arbitrary MNC with respect to the pseudometric space $(X,d_i)$ for each $i\in J$. For a bounded (w.r.t all pseudometrics) $E\subseteq X$ We define
$$\mu^*(E)=\sup_{i\in J} \mu_i(E).$$
\end{definition}

The following properties of such defined measure $\mu^*$ were proved.

\begin{proposition}\label{IvanAxiome}
Let $X$ be a uniform space with a family of pseudometrics $\{d_i\,|\,i\in J\}$. The function $\mu^*$  has the following properties
\begin{enumerate}
\item $\mu^*(E)=+\infty$ if and only if $E$ is unbounded;
\item $\mu^*(E)=\mu^*(\overline E)$;
\item\label{MNC*TotBound} from $\mu^*(E)=0$ it follows that $E$ is totally bounded set;
\item from $E\subseteq F$ it follows $\mu^*(E)\le\mu^*(F)$;
\item $\mu^*(E\cup\{x\})=\mu^*(E)$;
\item if $X$ is complete, and if $\{E_n\}_{n\in\N}$ is a sequence of closed subsets of $X$ such that
$E_{n+1}\subseteq E_n$ for each $n\in\N$ and $\lim_{n\to\infty}\mu^*(E_n)=0$, then $K=\bigcap_{n\in\N}E_n$ is
a nonempty compact set.
\end{enumerate}
\end{proposition}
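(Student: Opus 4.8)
The plan is to reduce the first five assertions to the corresponding axioms satisfied by a single measure of noncompactness on a pseudometric space, combined with elementary facts about suprema, and to reserve a separate (and harder) argument for the sixth, which is the only one that genuinely invokes completeness.

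I would begin with the monotone items, which are almost immediate. For property~(4), if $E\subseteq F$ then $\mu_i(E)\le\mu_i(F)\le\mu^*(F)$ for every $i\in J$, and taking the supremum over $i$ on the left gives $\mu^*(E)\le\mu^*(F)$. Property~(5) is the same computation: each single MNC is unaffected by adjoining one point, $\mu_i(E\cup\{x\})=\mu_i(E)$, so the two suprema coincide. For property~(1) I would use that, for each $i$, $\mu_i(A)=+\infty$ precisely when $A$ is $d_i$-unbounded (a finite family of $d_i$-balls of finite radius is $d_i$-bounded, so an unbounded set admits neither a finite cover nor a finite diameter-bounded decomposition); since $E$ is declared bounded exactly when it is $d_i$-bounded for every $i$, passing to the supremum identifies $\mu^*(E)=+\infty$ with unboundedness of $E$. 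For property~(3), $\mu^*(E)=0$ forces $\mu_i(E)=0$ for every $i$, hence each set is $d_i$-totally bounded, which is precisely the definition of total boundedness in the uniform space.

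Property~(2) is slightly less automatic, and here I would lean on property~(4). The one structural fact needed is that the uniform topology is finer than the topology induced by any single $d_i$, so that the uniform closure is contained in every $d_i$-closure: $E\subseteq\overline E\subseteq\overline E^{\,d_i}$. Monotonicity of $\mu_i$ then yields $\mu_i(E)\le\mu_i(\overline E)\le\mu_i(\overline E^{\,d_i})$, and the single-pseudometric identity $\mu_i(\overline E^{\,d_i})=\mu_i(E)$ collapses this to $\mu_i(\overline E)=\mu_i(E)$; taking the supremum gives $\mu^*(\overline E)=\mu^*(E)$.

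The substantial step, which I expect to be the main obstacle, is property~(6). I would split it into compactness and nonemptiness of $K=\bigcap_{n}E_n$. Compactness is the easier half: from $K\subseteq E_n$ and property~(4) we get $\mu^*(K)\le\mu^*(E_n)\to0$, so $\mu^*(K)=0$ and $K$ is totally bounded by property~(3); as an intersection of closed sets $K$ is closed, hence complete in the complete space $X$, and a complete totally bounded uniform space is compact. For nonemptiness I would choose $x_n\in E_n$ (the $E_n$ being nonempty) and set $A=\{x_n:n\in\N\}$. For each $k$ the tail $\{x_n:n\ge k\}$ is contained in $E_k$, so discarding the finitely many initial terms by repeated use of property~(5) and then applying property~(4) gives $\mu^*(A)=\mu^*(\{x_n:n\ge k\})\le\mu^*(E_k)\to0$, whence $\mu^*(A)=0$. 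By the compactness half applied to $\overline A$, the sequence $(x_n)$ is a net in the compact set $\overline A$ and therefore has a cluster point $x$. Finally, for every neighborhood $U$ of $x$ and every $k$ there is some $n\ge k$ with $x_n\in U\cap E_k$, so $U$ meets the closed set $E_k$; as $U$ was arbitrary, $x\in\overline{E_k}=E_k$, and since $k$ was arbitrary, $x\in K$, so $K\neq\varnothing$. The delicate point, and the reason the argument must be run through cluster points of nets (equivalently through Cauchy filters and completeness) rather than through convergent subsequences, is that in a uniform space with an uncountable family of pseudometrics one cannot in general extract a convergent subsequence from a totally bounded sequence.
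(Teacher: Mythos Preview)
Your proof is correct. Note, however, that the paper does not actually prove this proposition: the sentence preceding it reads ``The following properties of such defined measure $\mu^*$ were proved,'' and the intended proofs are those in the cited references \cite{Arandjelovic1} and \cite{Marina}; the only argument the paper supplies is the converse direction of item~(\ref{MNC*TotBound}) in Remark~\ref{DrugiSmer}. So there is nothing in the paper to compare your argument against beyond that remark, with which your treatment of item~(3) agrees.

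Your self-contained argument is sound. Items (1), (4), (5) reduce correctly to the corresponding single-pseudometric axioms via suprema. Your handling of item~(2) via the inclusion $\overline E\subseteq\overline E^{\,d_i}$ is the right structural observation. For item~(6), your split into compactness and nonemptiness is standard; the nonemptiness argument---building $A=\{x_n\}$, showing $\mu^*(A)=0$ by shaving off initial terms with item~(5), deducing that $\overline A$ is compact, and then locating a cluster point in every $E_k$---is correct, and your caution about cluster points rather than subsequences is well placed for a general (possibly non-metrizable) uniform structure.
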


\begin{remark}\label{DrugiSmer}
    Although only one direction of item (\ref{MNC*TotBound}) is proved in \cite{Arandjelovic1} and \cite{Marina}, the other is easy to see. Indeed, if $E$ is totally bounded then $\mu_i(E)=0$ for all $i$ and hence $\mu^*(E)=0$.
\end{remark}

\begin{lemma}\label{MNCinequalities}
    There holds
    $$\chi^*(E)\le I^*(E)\le\alpha^*(E)\le2\chi^*(E).$$
\end{lemma}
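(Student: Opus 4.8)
The plan is to reduce this entirely to the single-pseudometric inequalities already recorded in Proposition \ref{osobine MNC}\,(\ref{KnownIneq}) and then pass to the supremum over the defining family. Recall that, by definition, the starred quantities are nothing but $\mu^*$ for the three choices $\mu=\chi$, $\mu=I$, $\mu=\alpha$; that is, $\chi^*(E)=\sup_{i\in J}\chi_{d_i}(E)$, $I^*(E)=\sup_{i\in J}I_{d_i}(E)$, and $\alpha^*(E)=\sup_{i\in J}\alpha_{d_i}(E)$, where $\{d_i\mid i\in J\}$ is the family of pseudometrics defining the uniformity on $X$.

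First I would fix an arbitrary index $i\in J$ and apply Proposition \ref{osobine MNC}\,(\ref{KnownIneq}) to the pseudometric space $(X,d_i)$, which yields
$$\chi_{d_i}(E)\le I_{d_i}(E)\le\alpha_{d_i}(E)\le2\chi_{d_i}(E).$$
(The proposition is stated for a metric $X$, but as already noted in the text nothing is lost in replacing the metric by a pseudometric, so this chain is valid for each $d_i$.)

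Next I would take the supremum over $i\in J$, using the elementary monotonicity of the supremum: from $\chi_{d_i}(E)\le I_{d_i}(E)$ for every $i$ we get $\sup_{i}\chi_{d_i}(E)\le\sup_{i}I_{d_i}(E)$, i.e.\ $\chi^*(E)\le I^*(E)$, and likewise $I^*(E)\le\alpha^*(E)$ from the middle inequality. For the final estimate, taking suprema in $\alpha_{d_i}(E)\le2\chi_{d_i}(E)$ and pulling the positive constant out of the supremum gives
$$\alpha^*(E)=\sup_{i}\alpha_{d_i}(E)\le\sup_{i}2\chi_{d_i}(E)=2\sup_{i}\chi_{d_i}(E)=2\chi^*(E).$$

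There is essentially no obstacle here: the content is carried entirely by Proposition \ref{osobine MNC}, and the only facts used beyond it are that $\sup$ preserves pointwise inequalities and that $\sup_i(2f_i)=2\sup_i f_i$ for a nonnegative constant factor. The only point deserving a word of care is the harmless passage from metrics to pseudometrics in invoking Proposition \ref{osobine MNC}, which the preceding discussion already licenses.
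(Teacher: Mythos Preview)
Your proof is correct and follows exactly the route the paper takes: invoke Proposition \ref{osobine MNC}\,(\ref{KnownIneq}) for each pseudometric $d_i$ and then pass to the supremum over $i\in J$, which is precisely what the paper means by ``follows immediately from Proposition \ref{osobine MNC}\,(\ref{KnownIneq}) and the definition.'' There is nothing to add.
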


\begin{proof}
    It follows immediately from Proposition \ref{osobine MNC} (\ref{KnownIneq}) and the definition.
\end{proof}

\subsection{Measure of noncompactness $\lambda$.}

The measure of noncompactness $\lambda$ defined by Definition \ref{DefLambda} was studied in \cite{Keckic2}. Among others, the following properties of $\lambda$ were proved.

\begin{proposition}\label{osobine lambda}
The measure of noncompactness $\lambda$ has the following properties
\begin{enumerate}
    \item $\lambda(E)=\inf_{M\in \mathcal{F}}\sup_{x\in E} d(x,M)=\lim_{n\to\infty}\sup_{x\in E}\|x-P_nx\|$, where $\mathcal{F}$ is the set of all free finitely generated modules and $P_n\colon H_{\mA}\to H_{\mA}$ is given by $P_n(x_1,x_2,\ldots)=(x_1,x_2,\ldots,x_n,0,0,\ldots)$.
    \item if $E\subset F$, then $\lambda(E)\leq\lambda(F)$;
    \item $\lambda(E+F)\leq\lambda(E)+\lambda(F)$;
    \item $\lambda(B_1)=1$, where $B_1$ is the unit ball in $H_{\mA}$;
    \item $\lambda(E)\leq\sup_{x\in E}\|x\|$;
    \item $\lambda(E)=0$ iff $E$ is $\mA$-precompact.
\end{enumerate}
\end{proposition}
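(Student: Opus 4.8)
The plan is to establish the two alternative expressions in item (1) first, since the ``limit form'' $\lambda(E)=\lim_{n}\sup_{x\in E}\|x-P_nx\|$ renders items (2)--(5) almost immediate. The first equality is merely the definition rewritten: $\lambda(E)=\inf\{\eta>0:\exists L\in\mathcal F,\ d(E,L)<\eta\}=\inf_{L\in\mathcal F}d(E,L)=\inf_{L\in\mathcal F}\sup_{x\in E}d(x,L)$. For the second equality I would first observe that, writing $x=(x_1,x_2,\dots)$, one has $x-P_nx=(0,\dots,0,x_{n+1},\dots)$, so $\|x-P_nx\|^2=\|\sum_{k>n}x_k^*x_k\|$ is non-increasing in $n$; hence $a_n(E):=\sup_{x\in E}\|x-P_nx\|$ is a non-increasing sequence, convergent to $\inf_n a_n(E)$. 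Since each $\ran P_n\cong\mA^n$ is a free finitely generated module, $a_n(E)\ge\sup_{x\in E}d(x,\ran P_n)\ge\lambda(E)$, which gives $\lim_n a_n(E)\ge\lambda(E)$.

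The reverse inequality $\lim_n a_n(E)\le\lambda(E)$ is the crux, and I expect it to be the main obstacle. Given any free finitely generated $M$ with $d(E,M)<\eta$, I must produce some $n$ with $a_n(E)$ close to $\eta$; equivalently, I must approximate an arbitrary free finitely generated submodule from ``inside'' by the coordinate modules $\ran P_n$. This is exactly where \cite[Proposition 2.6.2]{Manuilov} enters: it permits replacing $M$, up to arbitrarily small error, by a finitely generated projective submodule contained in $\ran P_n$ for $n$ large, so that $d(E,\ran P_n)\le d(E,M)+\e<\eta+\e$. Controlling this approximation uniformly over $x\in E$ is the delicate point.

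With item (1) in hand, items (2)--(5) follow quickly. (2) Monotonicity is immediate from $d(E,L)\le d(F,L)$ whenever $E\subseteq F$, upon taking the infimum over $L$. (3) For $u\in E$ and $v\in F$, $\|(u+v)-P_n(u+v)\|\le\|u-P_nu\|+\|v-P_nv\|$, so $a_n(E+F)\le a_n(E)+a_n(F)$, and letting $n\to\infty$ yields subadditivity. (5) Since $a_n(E)$ is non-increasing with $a_0(E)=\sup_{x\in E}\|x\|$, the limit form gives $\lambda(E)\le a_0(E)=\sup_{x\in E}\|x\|$. (4) Here $a_n(B_1)=\sup_{\|x\|\le1}\|x-P_nx\|=\|I-P_n\|$; as $I-P_n$ is an orthogonal projection it is a contraction, so $a_n(B_1)\le1$, while testing on the basic vector $e_{n+1}$ (unital case) or on an approximate-identity vector $e_{n+1}^\alpha$ of norm tending to $1$ (non-unital case) gives $a_n(B_1)\ge1$; hence $\lambda(B_1)=1$.

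Finally, for item (6), unwinding the definition shows that $\lambda(E)=0$ says precisely that for every $\e>0$ there is a free finitely generated $L$ with $d(E,L)<\e$. Comparing this with the definition of $\mA$-precompactness preceding \cite[Proposition 2.6.2]{Manuilov}, the equivalence reduces to passing between free finitely generated modules and general finitely generated projective submodules, and \cite[Proposition 2.6.2]{Manuilov} again supplies this passage. As in item (1), the substantive content is the module-approximation result, while the remainder is formal.
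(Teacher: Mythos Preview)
The paper does not prove this proposition: it is quoted from \cite{Keckic2} (see the sentence immediately preceding the statement), so there is no in-paper proof to compare against line by line. That said, the paper does reproduce the crux argument---the reverse inequality in item~(1)---inside the proof of the lemma that opens \S3.2 (the one immediately preceding Definition~\ref{DefLambda2}), and it is worth comparing your sketch with that.

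Your identification of the hard direction in (1) and your handling of (2)--(6) are correct. Where you differ from the argument the paper actually records is in how the ``uniformity over $x\in E$'' is obtained. You propose to approximate a free finitely generated $M$ from inside some $\ran P_n$ via \cite[Proposition~2.6.2]{Manuilov}, and you rightly flag the uniformity as ``delicate'' without resolving it. The paper's device is cleaner and removes the delicacy in one stroke: for a finitely generated projective $L$ (any free $M$ qualifies) the projection $P_L$ is $\mA$-compact by \cite[Theorem~15.4.2]{Wegge}, hence $\|(I-P_n)P_L\|\to0$ by \cite[Proposition~2.2.1]{Manuilov}; then for every $x\in E$
\[
\|x-P_nx\|=d(x,\ran P_n)\le\|x-P_nP_Lx\|\le\|x-P_Lx\|+\|E\|\,\|(I-P_n)P_L\|,
\]
and the right-hand side is uniform in $x$ because the $x$-dependence enters only through $\|x\|\le\|E\|$ and the operator norm. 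Your route via pushing generators of $M$ into $\ran P_n$ would also succeed, but bounding the module coefficients uniformly over $E$ is less immediate than this operator-norm trick.
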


\subsection{Troitsky's topology}

\begin{definition}
Let $\mathcal N$ be a Hilbert $C^*$-module over $\mathcal{A}.$ A countable system $X=\{x_i\}$ of its elements is called admissible for a (possibly non-closed) submodule $\mathcal N^0\subset\mathcal N$ (or $\mathcal N^0$-admissible) if
\begin{itemize}
\item[1)] for every $x\in\mathcal N^0$ the series $\sum_i\skp{x}{x_i}\skp{x_i}{x}$ is convergent;
\item[2)] the sum in the previous item is bounded by $\skp{x}{x}$;
\item[3)] $\|x_i\|\leq1$ for each $i$.
\end{itemize}
\end{definition}

\begin{lemma} Let $\mathcal N$, $\mathcal N^0$ be as in the previous definition, and let $\Phi$ be a countable collection $\{\varphi_1,\varphi_2,\ldots\}$ of states on $\mathcal{A}$. Further, let $X=\{x_i\}$ be an $\mathcal N^0$-admissible system. The function $p_{X,\Phi}$ defined by
\begin{equation}\label{TroiSemiNorm}
p_{X,\Phi}(x)=\sqrt{\sup\limits_k\sum_{i=k}^{\infty}|\varphi_k(\skp{x}{x_i})|^2},\quad x\in N^0.
\end{equation}
is well-defined seminorm on $\mathcal N^0$ and $p_{X,\Phi}(x)\leq\|x\|$.
\end{lemma}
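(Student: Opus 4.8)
The plan is to view, for each fixed index $k$, the assignment
\[
v_k\colon \mathcal N^0\to\ell^2(\N),\qquad v_k(x)=\bigl(\va_k(\skp{x}{x_i})\bigr)_{i\ge k},
\]
as an additive, conjugate-homogeneous map into the Hilbert space of square-summable sequences, so that the quantity under the square root in (\ref{TroiSemiNorm}) is exactly $\|v_k(x)\|_2^2$. Then $p_{X,\Phi}(x)=\sup_k\|v_k(x)\|_2$, and the assertion splits into two parts: first, that each $v_k(x)$ really lies in $\ell^2(\N)$ with $\|v_k(x)\|_2\le\|x\|$ (which simultaneously yields well-definedness and the bound $p_{X,\Phi}\le\|\cdot\|$), and second, that a supremum of seminorms is again a seminorm.

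For the boundedness, the key tool is the Cauchy--Schwarz inequality for a state: for any $a\in\mA$ one has $|\va_k(a)|^2\le\va_k(a^*a)$, using $\|\va_k\|=1$. I would apply this with $a=\skp{x_i}{x}$, so that $a^*a=\skp{x}{x_i}\skp{x_i}{x}$ and, since $\va_k(a^*)=\overline{\va_k(a)}$, the term $|\va_k(\skp{x}{x_i})|^2$ is bounded by $\va_k(\skp{x}{x_i}\skp{x_i}{x})$. Summing over $i\ge k$ and using that each summand is positive together with the positivity and continuity of $\va_k$, the tail is dominated by the full series; admissibility condition (2) then gives
\[
\sum_{i=k}^\infty|\va_k(\skp{x}{x_i})|^2\le\va_k\Bigl(\sum_{i=1}^\infty\skp{x}{x_i}\skp{x_i}{x}\Bigr)\le\va_k(\skp{x}{x})\le\|\skp{x}{x}\|=\|x\|^2.
\]
This bound is uniform in $k$, so the supremum in (\ref{TroiSemiNorm}) is finite and $p_{X,\Phi}(x)\le\|x\|$.

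It remains to verify the seminorm axioms. Nonnegativity is immediate. For homogeneity, $\skp{\lambda x}{x_i}=\bar\lambda\skp{x}{x_i}$ for $\lambda\in\C$, whence $v_k(\lambda x)=\bar\lambda\,v_k(x)$ and $\|v_k(\lambda x)\|_2=|\lambda|\,\|v_k(x)\|_2$, giving $p_{X,\Phi}(\lambda x)=|\lambda|\,p_{X,\Phi}(x)$. For subadditivity, additivity of the inner product in the first slot and linearity of $\va_k$ give $v_k(x+y)=v_k(x)+v_k(y)$; the triangle inequality in $\ell^2(\N)$ then yields $\|v_k(x+y)\|_2\le\|v_k(x)\|_2+\|v_k(y)\|_2\le p_{X,\Phi}(x)+p_{X,\Phi}(y)$, and taking the supremum over $k$ gives $p_{X,\Phi}(x+y)\le p_{X,\Phi}(x)+p_{X,\Phi}(y)$.

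I expect the only genuine obstacle to be the boundedness step, specifically the justification that the tail $\sum_{i\ge k}\va_k(\skp{x}{x_i}\skp{x_i}{x})$ is dominated by $\va_k$ applied to the norm-convergent sum $\sum_{i\ge1}\skp{x}{x_i}\skp{x_i}{x}$; this rests on the positivity of each term and the continuity of the state, and is precisely where admissibility conditions (1) and (2) enter. Once this estimate is secured, the seminorm properties follow formally from the additivity and conjugate-homogeneity of $v_k$ and the Hilbert-space structure of $\ell^2(\N)$.
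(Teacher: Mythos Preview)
Your proof is correct. The paper itself does not supply an argument here: its entire proof is the sentence ``It was shown in \cite[Lemma 2.5, Theorem 2.6]{Troi}.'' By contrast, you give a self-contained verification. Your organizing device---viewing $x\mapsto(\va_k(\skp{x}{x_i}))_{i\ge k}$ as a conjugate-linear map into $\ell^2$ and writing $p_{X,\Phi}(x)=\sup_k\|v_k(x)\|_2$---makes the seminorm axioms immediate from the Hilbert-space structure of $\ell^2$, and the bound $p_{X,\Phi}\le\|\cdot\|$ follows cleanly from the state Cauchy--Schwarz inequality $|\va_k(a)|^2\le\va_k(a^*a)$ together with admissibility conditions (1) and (2). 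The only point worth flagging is that the inequality $|\va(a)|^2\le\va(a^*a)$ for a state on a possibly non-unital $C^*$-algebra requires a word of justification (pass to the unitization, or use an approximate identity in the general Cauchy--Schwarz $|\va(b^*a)|^2\le\va(a^*a)\va(b^*b)$); once that is granted, everything you wrote goes through.
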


\begin{proof}
It was shown in \cite[Lemma 2.5, Theorem 2.6]{Troi}.
\end{proof}

\begin{definition}
Let $\mathcal N$ be a Hilbert $C^*-$module over a unital $C^*$-algebra $\mathcal{A}$, and let $\mathcal N^0$ be some (possibly non closed) submodule of $\mathcal N$. Troitsky's topology $\tau$ (or more informatively $(\mathcal N,\mathcal N_0)$-topology) is the locally convex topology defined by the family of seminorms (\ref{TroiSemiNorm}), where $X=\{x_i\}$ is $\mathcal N^0$-admissible and $\Phi=\{\varphi_1,\varphi_2,\ldots\}$ is a countable collection of states on $\mathcal{A}$.
\end{definition}

This family of seminorms, which determine the uniform structure on $\mathcal N$ was initially introduced in \cite{Troi}. This uniform structure perfectly fits the notion of $\mA$-compact operators.

\begin{theorem}\label{Troi teorema}\cite[Theorem 2.13]{Troi}
    Suppose, $F:\mathcal M \to\mathcal N$ is an adjointable morphism of Hilbert $C^*$-modules over $C^*$-algebra $\mA$. Then $F$ is $\mA$-compact if and only if $F(B)$ is $(\mathcal N,\mathcal N)$-totally bounded with respect to uniform structure $\tau$, where $B$ is the unit ball of $M$.
\end{theorem}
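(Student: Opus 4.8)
The statement is an ``if and only if'', so I would prove the two implications separately, and I expect the implication ``$F(B)$ totally bounded $\Rightarrow$ $F$ $\mA$-compact'' to be the genuinely hard one. For the forward direction, recall that an $\mA$-compact $F$ is by definition a norm limit of finite rank operators, $F=\lim_n K_n$ with each $K_n$ an $\mA$-linear combination of operators $\Theta_{y,z}$. Since every Troitsky seminorm satisfies $p_{X,\Phi}(\cdot)\le\|\cdot\|$, we get $\sup_{x\in B}p_{X,\Phi}(Fx-K_nx)\le\|F-K_n\|\to0$; that is, $K_n(B)$ approximates $F(B)$ uniformly with respect to \emph{every} pseudometric of the uniformity $\tau$. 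As total boundedness in a uniform space is preserved under such uniform approximation, it suffices to show that each finite rank operator, hence each $\Theta_{y,z}$, sends $B$ into a $\tau$-totally bounded set.

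For the finite rank case I would compute $\skp{\Theta_{y,z}x}{x_i}=\skp{z}{x}^*\skp{y}{x_i}$, write $a=\skp{z}{x}$ with $\|a\|\le\|z\|$, and apply the Cauchy--Schwarz inequality for the state $\varphi_k$ to obtain $|\varphi_k(a^*\skp{y}{x_i})|^2\le\|z\|^2\,\varphi_k(\skp{x_i}{y}\skp{y}{x_i})$. The tail of the defining series is then dominated, uniformly over $a$, by the tail of a fixed convergent series controlled through the admissibility of $X$. The diagonal structure of the seminorm, in which the state $\varphi_k$ is paired precisely with the series starting at index $i=k$, is what forces these tails to be uniformly small; the finitely many leading terms $i<N$ factor through the finite-dimensional map $a\mapsto(\varphi_k(a^*\skp{y}{x_i}))_{k,\,i<N}$, whose image on a norm-bounded set is automatically totally bounded. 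Splitting $\sup_k$ into the ranges $k\ge N$ and $k<N$ and combining these two observations covers $\Theta_{y,z}(B)$ by finitely many $p_{X,\Phi}$-balls.

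For the reverse direction I would argue by contraposition, and this is where I expect the real work. First I would use a frame (quasi-basis) of the countably generated module to set up truncations $P_n$ and a norm characterization of $\mA$-compactness in terms of the convergence of these truncations to $F$. If $F$ is \emph{not} $\mA$-compact this convergence fails, yielding some $\e_0>0$ and a normalized sequence $(x^{(n)})$ in $B$ whose images $Fx^{(n)}$ ``escape'' every finitely generated piece of $\mathcal{N}$. The crux, and the main obstacle, is to manufacture from this escaping sequence a \emph{single} admissible system $X=\{x_i\}$ together with a sequence of states $\Phi=\{\varphi_k\}$ whose associated seminorm $p_{X,\Phi}$ separates infinitely many of the $Fx^{(n)}$ by a fixed amount; such a separated family shows at once that $F(B)$ fails to be $\tau$-totally bounded, since its Istr\u{a}\c{t}escu measure for $p_{X,\Phi}$ is then positive.

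The delicate point in that construction is to build $X$ and $\Phi$ simultaneously so that admissibility conditions 1)--3) are respected, in particular the bound $\sum_i\skp{x}{x_i}\skp{x_i}{x}\le\skp{x}{x}$, while the chosen states still detect the separation of the $Fx^{(n)}$. I expect this to require a diagonal, inductive selection: at each stage one enlarges $X$ and adjoins a new state adapted to the next escaping vector, passing to a subsequence to keep the normalization and the admissibility estimate under control. Establishing the underlying truncation characterization of $\mA$-compactness on a general countably generated module, and keeping admissibility intact throughout this selection, are the two steps I anticipate to be the heart of the argument.
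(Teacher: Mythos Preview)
The paper does not prove this statement. Theorem~\ref{Troi teorema} sits in the \emph{Prerequisites} section and is simply quoted from \cite[Theorem~2.13]{Troi}; there is no accompanying argument in this manuscript to compare your sketch against.

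That said, your outline is broadly in the spirit of how such results are proved, and the ideas you describe for the hard direction (contraposition, a sequence escaping every finitely generated piece, and an inductive construction of an admissible system together with suitable states) are precisely the ingredients that appear in the paper's own Theorem~\ref{thm3}, where the authors show $\lambda(E)\le\chi^*(E)$ on $H_{\mA}$. There the admissible system is built via Lemma~\ref{Lema1} from normalized blocks $(P_{i_{j+1}}-P_{i_j})z_{i_j}$, and the states come from Lemma~\ref{lema Marfi}; this is essentially the ``diagonal, inductive selection'' you anticipate. So while your proposal cannot be compared to a proof in this paper (there is none), it lines up well with the techniques the paper deploys for its closely related results and with Troitsky's original argument.
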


Although \cite{Troi} didn't deal with sets, but only with operators, the following statement is easy to derive.

\begin{corollary}\label{Troi posledica}
    Let $H_\mA$ be the standard Hilbert $C^*$-module over a unital $C^*$-algebra $\mA$ and let $P_n:H_\mA\to H_\mA$ be the projection given by (\ref{PeeN}). If $E$ is a norm bounded set, then $P_n(E)$ is totally bounded with respect to $\tau$.
\end{corollary}

\begin{proof}
    Let $e_n$, $n\ge 1$ be the standard basis for $H_\mA$ ($n$-th coordinate of $e_n$ being the unity of $\mA$ and otghers vanishes). Then $P_n(x)=\sum_{k=1}^n\skp x{e_k}e_k$, hence $\mA$ compact. If $E$ is norm bounded then $E\subseteq tB$, for a suitable $t>0$. Hence $P_n(E)\subseteq tP_n(B)$ wich is totally bounded.
\end{proof}

Finally, we quote a simple and well known property of positive elements.

\begin{lemma}\label{lema Marfi} \cite[Lemma 3.3.6]{Marfi} If $a$ is a normal element of a non-zero $C^*$-algebra $\mA$, then there is a state $\varphi$ of $\mA$ such that   $|\varphi(a)|=\|a\|$.
\end{lemma}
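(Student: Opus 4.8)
The plan is to exploit the normality of $a$ to reduce the problem to a commutative $C^*$-algebra, where Gelfand duality makes states transparent, and then to transport the resulting functional back to $\mA$ by a state extension argument. First I would consider the $C^*$-subalgebra $\mB=C^*(a)$ generated by $a$. Since $a$ is normal, $a$ and $a^*$ commute, so $\mB$ is commutative, and by the Gelfand--Naimark theorem $\mB$ is isometrically $*$-isomorphic to $C_0(\Omega)$, where $\Omega$ is the (locally compact Hausdorff) character space of $\mB$. Under this isomorphism $a$ corresponds to its Gelfand transform $\hat a\in C_0(\Omega)$, and the transform is isometric, so $\|\hat a\|_\infty=\|a\|$.

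The second step is to produce a character of $\mB$ at which the value of $a$ has modulus $\|a\|$. If $a=0$ the claim is trivial, since $|\varphi(a)|=0=\|a\|$ for any state $\varphi$ (states exist because $\mA$ is non-zero), so assume $a\ne 0$. Then $\hat a$ is a nonzero element of $C_0(\Omega)$, and since the set where $|\hat a|\ge\|\hat a\|_\infty/2$ is compact, the continuous function $|\hat a|$ attains its supremum at some $\omega_0\in\Omega$, giving $|\hat a(\omega_0)|=\|a\|$. The evaluation $\tau_0(b)=\hat b(\omega_0)$ is a character of $\mB$, hence a state (characters of commutative $C^*$-algebras are positive functionals of norm one), and it satisfies $|\tau_0(a)|=\|a\|$.

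Finally I would extend $\tau_0$ from $\mB$ to all of $\mA$. By the state extension theorem---a consequence of the Hahn--Banach theorem together with the characterization of states among bounded functionals---the state $\tau_0$ on the $C^*$-subalgebra $\mB$ extends to a state $\varphi$ on $\mA$ with $\varphi|_{\mB}=\tau_0$. Then $|\varphi(a)|=|\tau_0(a)|=\|a\|$, which is the desired conclusion.

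The step I expect to require the most care is the last one, the extension of a state across the inclusion $\mB\subseteq\mA$ without assuming $\mA$ is unital; this is where one must be precise about which bounded functionals qualify as states. An alternative route avoids extension altogether by establishing directly that $\sup_{\varphi}|\varphi(a)|=\|a\|$ over all states $\varphi$ of $\mA$, with the supremum attained by weak-$*$ compactness of the state (or quasi-state) space; however, verifying that identity for normal $a$ ultimately reduces to the same Gelfand computation, so I would favour the commutative reduction above.
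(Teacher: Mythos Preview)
Your proof is correct and follows the standard route that the paper itself defers to (it merely cites Murphy's Lemma~3.3.6 without giving a proof): reduce to the commutative $C^*$-subalgebra generated by the normal element, produce a character attaining the norm via Gelfand theory, and extend it to a state on $\mA$ by Hahn--Banach. The care you flag about the non-unital extension step is exactly where Murphy also passes to the unitization and uses the characterization of states as norm-one positive functionals.
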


\section{Main results}

\subsection{Standard Hilbert module}

\begin{lemma}\label{UnitBall}
Let $H_{\mA}$ be the standard Hilbert module over $\mA$, and let $p_{\a},\a\in J$ be a family of seminorms such that $p_{\a}(x)\leq\|x\|$ for every $\a\in J$, $x\in H_{\mA}$, which turns $H_{\mA}$ into a locally convex space.

Let $B_1$ denote the unit ball in $H_\mA$. Then $\chi^*(B_1)\le 1$.
\end{lemma}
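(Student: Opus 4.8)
The plan is to reduce the assertion to a statement about each individual seminorm. By definition of the supremum measure we have $\chi^*(B_1)=\sup_{\a\in J}\chi_{p_\a}(B_1)$, where $\chi_{p_\a}$ denotes the Hausdorff measure associated to the pseudometric $d_\a(x,y)=p_\a(x-y)$. Hence it suffices to prove the uniform bound $\chi_{p_\a}(B_1)\le 1$ for every fixed index $\a\in J$, and then pass to the supremum.

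Fix $\a$. The key idea is that a single $p_\a$-ball centered at the origin already covers the whole unit ball. Indeed, for any $x\in B_1$ the hypothesis $p_\a(x)\le\|x\|$ together with $\|x\|\le 1$ gives $p_\a(x)\le 1$. Consequently, for every $\e>1$ we have $p_\a(x)<\e$ for all $x\in B_1$, so $B_1\subseteq B_{d_\a}(0;\e)$. This exhibits a finite cover of $B_1$ by $p_\a$-balls of radius $\e$, namely the one-element cover $\{B_{d_\a}(0;\e)\}$, and therefore $\chi_{p_\a}(B_1)\le\e$.

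Since this holds for every $\e>1$, letting $\e\to 1^{+}$ yields $\chi_{p_\a}(B_1)\le 1$, uniformly in $\a$. Taking the supremum over $\a\in J$ then gives $\chi^*(B_1)\le 1$, as claimed.

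There is no serious obstacle in this argument; the only points to keep in mind are that the measure $\chi^*$ on the uniform space is by definition the supremum of the seminorm-wise Hausdorff measures, and that a finite cover is allowed to consist of a single ball. The domination hypothesis $p_\a(\cdot)\le\|\cdot\|$ supplies exactly the estimate needed to fit $B_1$ inside one such ball.
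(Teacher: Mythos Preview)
Your proof is correct and follows essentially the same idea as the paper: both observe that the domination $p_\a(x)\le\|x\|$ forces the norm unit ball $B_1$ to sit inside a single $p_\a$-ball of radius (arbitrarily close to) $1$ centered at $0$, giving $\chi_{p_\a}(B_1)\le1$ for every $\a$ and hence $\chi^*(B_1)\le1$. Your version is slightly more careful in handling the strict inequality by passing through $\e>1$, but the approach is the same.
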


\begin{proof}
    Indeed, from $p_\a(x)\le\|x\|$ we have $B_1\subseteq \{x\in H_\mA\mid p_\a(x)<1\}$ for any $\a$.
\end{proof}

\begin{lemma}\label{sub}
Let $H_{\mA}$ be the standard Hilbert module over $\mA$, and let $p_{\a},\a\in J$ be a family of seminorms such that $p_{\a}(x)\leq\|x\|$ for every $\a\in J$, $x\in H_{\mA}$, which turns $H_{\mA}$ into a locally convex space.

Let $P_n:H_\mA\to H_\mA$ denote the projection (\ref{PeeN}) and let $E\subset H_{\mA}$ be a norm bounded set. If $P_nE$ is totally bounded (with respect to all $p_\alpha$) for every $n\in\mathbb{N}$, then
\begin{equation}\label{ChiLeLambda}
\mu^*(E)\leq\lambda(E)\mu^*(B_1),\qquad B_1\mbox{ unit ball in }H_\mA,
\end{equation}
for every measure of noncompactness $\mu$ that satisfies
\begin{equation}\label{TotalBound}
    \mu(F)=0\quad\mbox{iff}\quad F\mbox{ is totally bounded}.
\end{equation}

In particular, (\ref{ChiLeLambda}) holds for $\mu\in\{\chi,\alpha,I\}$ and the family of seminorms (\ref{TroiSemiNorm}).

Concerning Hausdorff measure of noncompactness, by virtue of Lemma \ref{UnitBall}, we have
\begin{equation}\label{HausdorffLeLambda}
\chi^*(E)\le\lambda(E).
\end{equation}
\end{lemma}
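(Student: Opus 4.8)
The plan is to exploit the splitting $I=P_n+(I-P_n)$ together with the fact that the ``tail'' $(I-P_n)E$ sits inside a small multiple of the unit ball, while the ``head'' $P_nE$ is totally bounded and hence invisible to $\mu^*$. Concretely, I would fix $n\in\N$ and write, for every $x\in E$, $x=P_nx+(x-P_nx)$, so that $E\subseteq P_nE+(I-P_n)E$ as subsets of $H_\mA$.

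For the first summand, the hypothesis says $P_nE$ is totally bounded with respect to the family $p_\a$, so $\mu^*(P_nE)=0$ by (\ref{TotalBound}). For the second summand, set $s_n=\sup_{x\in E}\|x-P_nx\|$; since $\|x-P_nx\|\le s_n$ for every $x\in E$, we have $(I-P_n)E\subseteq s_nB_1$. Moreover $\mu^*(B_1)<\infty$, because $p_\a\le\|\cdot\|$ makes $B_1$ bounded for every $p_\a$, so by Proposition \ref{IvanAxiome}(1) the product $\lambda(E)\mu^*(B_1)$ is meaningful. Using monotonicity, subadditivity and positive homogeneity of $\mu^*$ I then obtain
$$\mu^*(E)\le\mu^*(P_nE)+\mu^*\bigl((I-P_n)E\bigr)\le 0+\mu^*(s_nB_1)=s_n\,\mu^*(B_1).$$

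Since the left-hand side is independent of $n$, I let $n\to\infty$: by Proposition \ref{osobine lambda}(1) the quantities $s_n=\sup_{x\in E}\|x-P_nx\|$ tend to $\lambda(E)$, and (\ref{ChiLeLambda}) follows. For the ``in particular'' assertion, $\chi$, $\alpha$ and $I$ satisfy (\ref{TotalBound}) by Proposition \ref{IvanAxiome}(\ref{MNC*TotBound}) together with Remark \ref{DrugiSmer}, while for the Troitsky seminorms (\ref{TroiSemiNorm}) the sets $P_nE$ are totally bounded by Corollary \ref{Troi posledica}; thus (\ref{ChiLeLambda}) applies to each of them. Finally, taking $\mu=\chi$ and invoking Lemma \ref{UnitBall}, i.e.\ $\chi^*(B_1)\le1$, yields (\ref{HausdorffLeLambda}).

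The step that needs the most care is the use of subadditivity and positive homogeneity of $\mu^*$, since these are not among the properties listed in Proposition \ref{IvanAxiome}. For a seminorm-induced pseudometric $d_\a(x,y)=p_\a(x-y)$ one has $d_\a(tx,ty)=|t|\,d_\a(x,y)$, which gives positive homogeneity of each component measure, and a cover of $E_1$ by $p_\a$-balls of radius $\e_1$ together with a cover of $E_2$ by $p_\a$-balls of radius $\e_2$ produces a cover of $E_1+E_2$ by $p_\a$-balls of radius $\e_1+\e_2$, which gives subadditivity; both properties then pass to the supremum $\mu^*=\sup_\a\mu_\a$. For an abstract MNC $\mu$ satisfying (\ref{TotalBound}), these same two properties are part of the axiom list of \cite{Arandjelovic1} and \cite{Marina}, so the argument goes through unchanged.
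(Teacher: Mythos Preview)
Your argument is correct and follows essentially the same route as the paper: split $E\subseteq P_nE+(I-P_n)E$, kill the first term via (\ref{TotalBound}), bound the second by $s_n\mu^*(B_1)$ using $(I-P_n)E\subseteq s_nB_1$, and let $n\to\infty$ via Proposition~\ref{osobine lambda}(1). Your additional paragraph explicitly justifying subadditivity and positive homogeneity of $\mu^*$ is a welcome bit of care that the paper leaves implicit.
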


\begin{proof}
Let E be a bounded set. Since $E\subset P_nE+(I-P_n)E$, and since $\mu^*$ is subadditive (Proposition \ref{osobine MNC}), we have
$$\mu^*(E)\leq\mu^*(P_nE)+\mu^*((I-P_n)E).$$
Since, by assumption $\mu^*(P_nE)=0$ we have
$$\mu^*(E)\leq\mu^*((I-P_n)E)\leq\sup\limits_{x\in E}\|(I-P_n)x\|\mu^*(B_1),$$
for all $n\in\mathbb{N}$. (The last inequality follows from $(I-P_n)(E)\subseteq\sup_{x\in E}\|(I-P_n)x\|B_1$.) Therefore, by Proposition \ref{osobine lambda}, $\mu^*(E)\leq\lambda(E)\mu^*(B_1)$.

By Corollary \ref{Troi posledica} $P_nE$ is totally bounded, with respect to seminorms (\ref{TroiSemiNorm}), whereas by Proposition \ref{osobine MNC}, Theorem \ref{IvanAxiome} and Remark \ref{DrugiSmer} all $\chi^*$, $\alpha^*$ and $I^*$ are subadditive and satisfy (\ref{TotalBound}).
\end{proof}

\begin{lemma}\label{Lema1}
Let $H_{\mathcal{A}}$ be a standard Hilbert $C^*$-module over $\mathcal{A}$ and $E\subset H_{\mathcal{A}}$. Let $Y=\{y_i\}, y_i\in E$ be a countable system and $(n_k)$ be an increasing sequence in $\mathbb{N}$. Then $$X=\{z_i\},\quad z_i=\|P_{n_i}(I-P_{n_{i-1}})y_i\|^{-1}P_{n_i}(I-P_{n_{i-1}})y_i$$
is $H_{\mathcal{A}}$-admissible, where $P_n$ is defined by (\ref{PeeN}).
\end{lemma}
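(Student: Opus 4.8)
I need to verify the three conditions in the definition of an $H_{\mathcal A}$-admissible system for the vectors $z_i$. The key structural observation is that the vectors $w_i := P_{n_i}(I-P_{n_{i-1}})y_i$ have \emph{disjoint supports}: $w_i$ is supported on coordinates $n_{i-1}+1,\dots,n_i$, and since $(n_k)$ is strictly increasing these coordinate blocks are pairwise disjoint. After normalizing to $z_i = \|w_i\|^{-1} w_i$, condition (3) $\|z_i\|\le 1$ is immediate (in fact equality). So the work lies in (1) and (2).

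\smallskip

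The plan is to exploit disjointness of supports to compute $\skp{x}{z_i}$ explicitly. Writing $x=(\xi_1,\xi_2,\dots)\in H_{\mathcal A}$, only the block of coordinates where $z_i$ lives contributes, so $\skp{x}{z_i}=\|w_i\|^{-1}\sum_{j=n_{i-1}+1}^{n_i}\xi_j^*(w_i)_j$ where $(w_i)_j$ denotes the $j$th coordinate of $w_i$. The central estimate is then to bound $\sum_i \skp{x}{z_i}\skp{z_i}{x}$ from above by $\skp{x}{x}$, which gives conditions (1) and (2) simultaneously. I would first handle the simpler auxiliary sum $\sum_i \skp{x}{w_i}\skp{w_i}{x}$ for the unnormalized $w_i$ and relate it back to $z_i$ at the end. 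Because the supports are disjoint, the natural inequality is a Bessel/Parseval-type bound: I expect that $\sum_i \skp{x}{P_{n_i}(I-P_{n_{i-1}})x}\le\skp{x}{x}$ holds because the projections $Q_i := P_{n_i}-P_{n_{i-1}}$ are mutually orthogonal and sum to (at most) the identity, so $\sum_i Q_i \le I$ as operators, giving $\sum_i \skp{x}{Q_i x}=\skp{x}{(\sum_i Q_i)x}\le\skp{x}{x}$.

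\smallskip

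The main obstacle is controlling the normalization factor $\|w_i\|^{-1}$. The cross term $\skp{x}{w_i}$ involves only the single vector $w_i=Q_i y_i$, not the full projection $Q_i x$, so I cannot directly invoke $\sum_i Q_i\le I$. The fix is the rank-one comparison: for a fixed vector $w$ of norm one, $\Theta_{w,w}=w\skp{w}{\cdot}$ is a projection onto the submodule it generates, and the relevant quantity $\skp{x}{z_i}\skp{z_i}{x}=\skp{x}{Q_i z_i}\skp{Q_i z_i}{x}$ is dominated by $\skp{x}{Q_i x}$ since $z_i$ is a unit vector in the range of $Q_i$ and $\Theta_{z_i,z_i}\le Q_i$ as operators on the block. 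Concretely I would argue $\skp{x}{z_i}\skp{z_i}{x}=\skp{Q_i x}{z_i}\skp{z_i}{Q_i x}\le\|z_i\|^2\skp{Q_i x}{Q_i x}=\skp{x}{Q_i x}$ by the Cauchy--Schwarz inequality quoted after the definition of Hilbert $C^*$-modules. Summing over $i$ and using $\sum_i Q_i\le I$ then yields $\sum_i\skp{x}{z_i}\skp{z_i}{x}\le\skp{x}{x}$, establishing both the convergence in (1) and the bound in (2) at once.

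\smallskip

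One point I would take care with is degenerate terms where $w_i=0$ (so the normalization is undefined). In that case one simply omits the corresponding index from the system, or sets $z_i$ to any admissible unit vector supported on the block; since such terms contribute nothing to the series the estimate is unaffected. With the orthogonality of the $Q_i$, the operator inequality $\sum_i Q_i\le I$, and the module Cauchy--Schwarz inequality in hand, all three admissibility conditions follow, completing the proof.
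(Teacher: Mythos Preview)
Your argument is correct and coincides with the paper's own proof: writing $Q_i=P_{n_i}-P_{n_{i-1}}$ (the paper's $T_i$), one uses that $Q_i$ is a selfadjoint projection so $\skp{x}{z_i}=\skp{Q_ix}{z_i}$, applies the module Cauchy--Schwarz inequality to get $\skp{x}{z_i}\skp{z_i}{x}\le\skp{Q_ix}{Q_ix}$, and then sums using the pairwise orthogonality of the $Q_i$. The only point the paper spells out more explicitly than you do is the norm convergence in condition (1): it follows not merely from $\sum_iQ_i\le I$ but from the fact that the tail $\sum_{i\ge m}\skp{Q_ix}{Q_ix}=\sum_{j>n_{m-1}}\xi_j^*\xi_j$ is a tail of the norm-convergent series defining $H_\mA$, hence the majorant (and therefore the dominated series) is Cauchy in norm.
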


\begin{proof}
     1) Let $x=(x_1,x_2,\ldots)\in H_{\mathcal{A}}$ and $\varepsilon>0$ be arbitrary. Since $\sum_{i=1}^{\infty}x_i^*x_i$ is convergent in norm there exists $m_0\in\mathbb{N}$ such that for any $m>m_0$ and $p_1\in\mathbb{N}$ we have
    $$\left\|\sum_{i=m}^{m+p_1}x_i^*x_i\right\|<\varepsilon.$$
Denote $T_i=P_{n_i}(I-P_{n_{i-1}})$; then $z_i=\|T_iy_i\|^{-1}T_iy_i$ and hence, taking into account that $T_i$ are selfadjoint projections:
\begin{align*}
    \left\|\sum_{i=m}^{m+p}\skp{x}{z_i}\skp{z_i}{x}\right\|&=
     \left\|\sum_{i=m}^{m+p}\skp{x}{\|T_iy_i\|^{-1}T_iy_i}\skp{\|T_iy_i\|^{-1}T_iy_i}{x}\right\|\\
     &=\left\|\sum_{i=m}^{m+p}\skp{T_ix}{\|T_iy_i\|^{-1}T_iy_i}\skp{\|T_iy_i\|^{-1}T_iy_i}{T_ix}\right\|
\end{align*}
However, by Cauchy Scwartz inequality \cite[Proposition 1.2.4]{Manuilov},
$$\skp{T_ix}{\|T_iy_i\|^{-1}T_iy_i}\skp{\|T_iy_i\|^{-1}T_iy_i}{T_ix}\le\skp{T_ix}{T_ix}.$$
Hence
\begin{align*}
    \left\|\sum_{i=m}^{m+p}\skp{x}{z_i}\skp{z_i}{x}\right\|&\le\left\|\sum_{i=m}^{m+p}\skp{T_ix}{T_ix}\right\|=\\
    &=\left\|\sum_{i=n_{m-1}+1}^{n_{m+p}}x_i^*x_i\right\|=
     \left\|\sum_{i=n_{m-1}+1}^{n_{m-1}+p_1}x_i^*x_i\right\|,
\end{align*}
for $p_1=n_{m+p}-n_{m-1}-1$. Since $n_{m-1}+1\geq m$, the last term does not exceed $\varepsilon$. Therefore, $\sum_{i=1}^{\infty}\skp{x}{z_i}\skp{z_i}{x}$ is convergent in the norm.

    2) For every $x\in H_{\mathcal{A}}$
    \begin{align*}
        \sum\limits_{i}\skp{x}{z_i}\skp{z_i}{x}\leq\sum_i\skp{T_ix}{T_ix}=\skp{x}{x}
    \end{align*}

    3) Obviously, $\|z_i\|\leq1$ for each $i$.
\end{proof}

\begin{theorem}\label{thm2}
Let $E\subset H_{\mA}$ be a bounded set, let $u\in A$ be a unitary and let $\mu$ stands for any of Hausoorff, Kuratowski or Istr\u{a}\c{t}escu measure of noncompactness. Then $\mu^*(Eu)=\mu^*(E)$.
\end{theorem}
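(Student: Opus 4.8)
The plan is to show that right multiplication by the unitary, $R_u\colon x\mapsto xu$, is a surjective isometry between two pseudometric spaces, each coming from a single Troitsky seminorm, and then to invoke the fact that a surjective isometry preserves each of $\chi$, $\alpha$ and $I$. Concretely, for a fixed seminorm $p_{X,\Phi}$ I will produce an $H_{\mA}$-admissible system $X'$ and a collection of states $\Phi'$ with $p_{X,\Phi}(xu)=p_{X',\Phi'}(x)$ for all $x$. Since $R_u$ is a linear bijection with inverse $R_{u^*}$, this makes it a surjective isometry from $(H_{\mA},d_{X',\Phi'})$ onto $(H_{\mA},d_{X,\Phi})$, where $d_{X,\Phi}(x,y)=p_{X,\Phi}(x-y)$, whence $\mu_{X,\Phi}(Eu)=\mu_{X',\Phi'}(E)\le\mu^*(E)$. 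Taking the supremum over all seminorms gives $\mu^*(Eu)\le\mu^*(E)$, and applying the same to $Eu$ and $u^*$ (so that $(Eu)u^*=E$) yields the reverse inequality, hence equality. Checking that an isometry leaves $\chi$, $\alpha$, $I$ invariant is routine: it sends $\e$-balls to $\e$-balls, preserves the diameters of partition pieces, and carries $\e$-separated sets to $\e$-separated sets.

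The computation underlying the isometry starts from $\skp{xu}{x_i}=u^*\skp{x}{x_i}$, which follows from $(\xi u)^*=u^*\xi^*$ coordinatewise. The naive guess of keeping the system $X$ and replacing $\varphi_k$ by $a\mapsto\varphi_k(u^*a)$ fails, because the latter is not a state. The remedy is to twist both the system and the states: put
$$x_i'=x_iu^*,\qquad \psi_k(a)=\varphi_k(u^*au),$$
and set $X'=\{x_i'\}$, $\Phi'=\{\psi_k\}$. Using property iv) and $u^*u=1$ one gets, for every $x$ and all $i,k$,
$$\psi_k(\skp{x}{x_i'})=\varphi_k\bigl(u^*\skp{x}{x_i}u^*u\bigr)=\varphi_k\bigl(u^*\skp{x}{x_i}\bigr)=\varphi_k(\skp{xu}{x_i}).$$
Since the index $i$ and the tail cut-off $k$ match term by term, substituting this into (\ref{TroiSemiNorm}) gives $p_{X,\Phi}(xu)=p_{X',\Phi'}(x)$ exactly.

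It remains to check that $(X',\Phi')$ is admissible data, so that $p_{X',\Phi'}$ is a genuine seminorm defining $\tau$. Each $\psi_k$ is a state because $\psi_k(1)=\varphi_k(u^*u)=1$ and $\psi_k(a^*a)=\varphi_k((au)^*(au))\ge0$. For $X'$ the decisive identity is
$$\skp{x}{x_i'}\skp{x_i'}{x}=\skp{x}{x_i}u^*u\skp{x_i}{x}=\skp{x}{x_i}\skp{x_i}{x},$$
so conditions 1) and 2) of admissibility transfer verbatim from $X$, while $\|x_i'\|=\|x_iu^*\|=\|x_i\|\le1$ gives condition 3). Thus $p_{X',\Phi'}$ is one of the seminorms defining Troitsky's topology, and the reduction of the first paragraph applies.

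The only real obstacle is exactly the one dissolved in the second paragraph. Right multiplication by $u$ is not $\mA$-linear, so $R_u$ is not an adjointable operator and cannot be handled as a ``unitary'' on $H_{\mA}$; moreover the factor $u^*$ produced by $\skp{xu}{x_i}=u^*\skp{x}{x_i}$ sits on the far left and cannot be absorbed into the right-hand slot of the inner product. Recognizing that it can instead be rewritten as $\varphi_k(u^*(\,\cdot\,))=\psi_k(\,\cdot\, u^*)$ after twisting the system by $u^*$, and that the unitarity relation $u^*u=1$ is precisely what makes the twisted system admissible, is the crux of the argument.
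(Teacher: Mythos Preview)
Your proof is correct and follows essentially the same route as the paper's: both define $x_i'=x_iu^*$ and $\psi_k(a)=\varphi_k(u^*au)$, verify that $(X',\Phi')$ is admissible via $\skp{x}{x_i'}\skp{x_i'}{x}=\skp{x}{x_i}\skp{x_i}{x}$, establish the key identity $p_{X,\Phi}(xu)=p_{X',\Phi'}(x)$, and then obtain the reverse inequality by replacing $u$ with $u^{-1}=u^*$. Your framing via ``$R_u$ is a surjective isometry between the two pseudometric spaces'' is a clean way to package the passage from the seminorm identity to $\mu_{X,\Phi}(Eu)=\mu_{X',\Phi'}(E)$, but the underlying argument is the same.
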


\begin{proof}
Let $\varepsilon>0$ be arbitrary. There exists an $H_{\mA}$-admissible pair $(X,\Phi), X=\{x_i\},\Phi=\{\varphi_i\}$ such that  $\mu(Eu)(p_{\Phi,X})>\mu^*(Eu)-\varepsilon.$ Let $\varphi_i^u(x)=\varphi_i(u^*xu)$  and $x_i^u=x_iu^*$ for $i\in\mathbb{N}$.  Obviously, $\varphi_i^u(1)=1$ and $\varphi_i^u(x)\geq0$ whenever $x\geq0$, so $\varphi_i^u$ are states on $H_{\mA}$. Since
$$\skp{x}{x_i^u}\skp{x_i^u}{x}=\skp{x}{x_i}u^*u\skp{x_i}{x}=\skp{x}{x_i}\skp{x_i}{x},$$
and
$$\|x_i^u\|=\|x_iu^*\|\leq\|x_i\|\,\cdot\|u^*\|=\|x_i\|,$$
the pair $(X^u,\Phi^u)$ is $H_{\mA}$-admissible, where $\Phi^u=\{\varphi_1^u,\varphi_2^u,\ldots\}$ and $X^u=\{x_1^u,x_2^u,\ldots\}$.

Thus
\begin{eqnarray*}
p_{X,\Phi}^2(xu)&=&\sup\limits_{k}\sum\limits_{i=k}^{+\infty}|\varphi_k(\skp{xu}{x_i})|^2=\sup\limits_{k}\sum\limits_{i=k}^{+\infty}|\varphi_k(u^*\skp{x}{x_i})|^2\\
&=&\sup\limits_{k}\sum\limits_{i=k}^{+\infty}|\varphi_k(u^*\skp{x}{x_iu^*}u)|^2=
\sup\limits_{k}\sum\limits_{i=k}^{+\infty}|\varphi_k^u(\skp{x}{x_iu^*})|^2\\
&=&\sup\limits_{k}\sum\limits_{i=k}^{+\infty}|\varphi_k^u(\skp{x}{x_i^u})|^2=p_{X^u,\Phi^u}^2(x).
\end{eqnarray*}
Therefore, $[\mu(E)](p_{X^u,\Phi^u})>\mu^*(Eu)-\varepsilon$ and hence $\mu^*(E)\geq\mu^*(Eu)$. The opposite inequality follows by $E=(Eu)u^{-1}.$
\end{proof}

\begin{theorem}\label{thm3}
For any bounded set $E\subset H_{\mA}$, we have
$$\chi^*(E)=\lambda(E).$$
\end{theorem}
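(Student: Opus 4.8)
The inequality $\chi^*(E)\le\lambda(E)$ is already available as (\ref{HausdorffLeLambda}), so the entire content of the theorem is the reverse inequality $\lambda(E)\le\chi^*(E)$. If $\lambda(E)=0$ there is nothing to prove, so I would assume $\lambda(E)>0$ and fix $\varepsilon$ with $0<\varepsilon<\lambda(E)$. The plan is to exhibit a single admissible pair $(X,\Phi)$ for which the Hausdorff measure $\chi_{p_{X,\Phi}}(E)$ is at least $\lambda(E)-\varepsilon$; since $\chi^*(E)$ is the supremum of such quantities over all admissible pairs, letting $\varepsilon\to0$ then finishes the proof.

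First I would build, recursively, a sequence $\{y_i\}\subset E$ and a strictly increasing sequence $(n_i)$ in $\N$. Writing $a_n=\sup_{x\in E}\|(I-P_n)x\|$, Proposition \ref{osobine lambda}(1) says $a_n$ decreases to $\lambda(E)$, so $a_n\ge\lambda(E)$ for every $n$. Hence at stage $m$ (with $n_{m-1}$ already chosen) I can pick $y_m\in E$ with $\|(I-P_{n_{m-1}})y_m\|>\lambda(E)-\varepsilon$, and then choose $n_m>n_{m-1}$ so large that simultaneously $\|P_{n_m}(I-P_{n_{m-1}})y_m\|>\lambda(E)-\varepsilon$ (possible since $P_{n_m}(I-P_{n_{m-1}})y_m\to(I-P_{n_{m-1}})y_m$ in norm) and $\|(I-P_{n_m})y_j\|<1/m$ for all $j\le m$ (possible since the tail of each fixed $y_j$ vanishes). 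Setting $T_i=P_{n_i}(I-P_{n_{i-1}})$, this guarantees $\|T_iy_i\|>\lambda(E)-\varepsilon$ for all $i$, while $\|T_{i'}y_i\|\le\|(I-P_{n_{i'-1}})y_i\|\to0$ as $i'\to\infty$ for each fixed $i$. By Lemma \ref{Lema1} the normalized vectors $z_i=\|T_iy_i\|^{-1}T_iy_i$ form an $H_\mA$-admissible system $X=\{z_i\}$. Since $\skp{y_i}{z_i}=\|T_iy_i\|^{-1}\skp{T_iy_i}{T_iy_i}$ is a positive element of $\mA$ of norm $\|T_iy_i\|$, Lemma \ref{lema Marfi} provides a state $\varphi_i$ with $\varphi_i(\skp{y_i}{z_i})=\|T_iy_i\|$; I set $\Phi=\{\varphi_i\}$ and $p=p_{X,\Phi}$.

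The decisive estimate is that for \emph{every} fixed $w\in H_\mA$ one has $\liminf_i p(y_i-w)\ge\lambda(E)-\varepsilon$. Indeed, evaluating the defining seminorm (\ref{TroiSemiNorm}) at index $k=i$ and keeping only the term $l=i$ gives $p(y_i-w)\ge|\varphi_i(\skp{y_i-w}{z_i})|\ge\|T_iy_i\|-|\varphi_i(\skp{w}{z_i})|$. By Cauchy--Schwarz $|\varphi_i(\skp{w}{z_i})|\le\|\skp{w}{z_i}\|\le\|T_iw\|$, and since $w$ is fixed and $n_{i-1}\to\infty$, the block $T_iw=P_{n_i}(I-P_{n_{i-1}})w$ satisfies $\|T_iw\|\le\|(I-P_{n_{i-1}})w\|\to0$. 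Thus $p(y_i-w)\ge(\lambda(E)-\varepsilon)-\|T_iw\|$, proving the claim. Consequently, for any radius $r<\lambda(E)-\varepsilon$ and any center $w$, the ball $B_p(w,r)$ contains $y_i$ for only finitely many $i$; a finite union of such balls therefore cannot contain the infinitely many points $y_i$, so $E$ admits no finite cover by $p$-balls of radius $r$. Hence $\chi_{p}(E)\ge\lambda(E)-\varepsilon$, and the theorem follows.

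I expect the main obstacle to be obtaining the sharp constant rather than $\lambda(E)/2$. A naive argument would only produce a $(\lambda(E)-\varepsilon)$-separated sequence $\{y_i\}$ and, via the triangle inequality, the weaker bound $\chi^*(E)\ge(\lambda(E)-\varepsilon)/2$. The gain comes from the asymmetric choice $k=i$ in the seminorm together with the fact that $\|T_iw\|\to0$ for \emph{every} fixed $w$: this shows that no single ball of radius below $\lambda(E)-\varepsilon$, however its center is chosen, can trap more than finitely many of the $y_i$, which is strictly stronger than pairwise separation and removes the factor of two. Verifying the two convergence facts used in the construction (that $\|P_{n_m}(I-P_{n_{m-1}})y_m\|$ approaches $\|(I-P_{n_{m-1}})y_m\|$, and that all relevant tails can be made small by a single choice of $n_m$) is routine from the norm convergence of $\sum_l\xi_l^*\xi_l$ in $H_\mA$.
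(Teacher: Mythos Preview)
Your proof is correct and follows essentially the same route as the paper's: construct the admissible system $X$ from normalized blocks $T_iy_i=P_{n_i}(I-P_{n_{i-1}})y_i$ via Lemma~\ref{Lema1}, pick the states $\varphi_i$ using Lemma~\ref{lema Marfi}, and exploit the $k=i$, $l=i$ term of the seminorm (\ref{TroiSemiNorm}) together with $\|T_iw\|\to0$ for any fixed $w$ to rule out finite $(\lambda(E)-\varepsilon)$-nets. The only difference is cosmetic: the paper phrases the last step as a contradiction (assume a finite net exists for the constructed seminorm, then reach a contradiction), whereas you argue directly that each $p$-ball of radius $r<\lambda(E)-\varepsilon$ captures only finitely many $y_i$; the underlying estimate is the same.
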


\begin{proof}
By Lemma \ref{sub} we have $\chi^*(E)\le\lambda(E)$. Therefore it is enough to prove the opposite inequality.

Let $\lambda=\lambda(E)=\inf_{i\geq1}\sup_{x\in E}\|(I-P_i)x\|$ and let $0<\varepsilon<\lambda$ be arbitrary. Suppose that the Hausdorff measure of noncompactness $\chi^*(E)$ is at most $\lambda-\varepsilon$, that is for every pair $(X,\Phi)$ with $H_{\mA}$-admissible $X$ there are $y_1,y_2,\ldots,y_l\in H_{\mA}$ such that for every $y\in E$ there exists $k\in\{1,2,\ldots,l\}$  such that $d_{X,\Phi}(y,y_k)<\lambda-\varepsilon$.

Since $\sup_{x\in E}\|(I-P_i)x\|=\lambda$, there is a sequence $z_i\in E$ such that $\|(I-P_i)z_i\|>\lambda-\varepsilon/4$. Further, there is an increasing sequence $i_j$ of positive integers for which $\|P_{i_{j+1}}z_{i_j}-z_{i_j}\|<\varepsilon/4$. We have
\begin{eqnarray*}
\|(P_{i_{j+1}}-P_{i_j})z_{i_j}\|&=&\|(P_{i_j}z_{i_j}-z_{i_j})-(P_{i_{j+1}}z_{i_j}-z_{i_j})\|\\
&>&\|(I-P_{i_j})z_{i_j}\|-\|P_{i_{j+1}}z_{i_j}-z_{i_j}\|>\lambda-\varepsilon/4-\varepsilon/4\\
&=&\lambda-\varepsilon/2.
\end{eqnarray*}

Choose $x_j=\|(P_{i_{j+1}}-P_{i_j})z_{i_j}\|^{-1}(P_{i_{j+1}}-P_{i_j})z_{i_j})$. By Lemma \ref{Lema1} $X_0=\{x_1,x_2,\dots\}$ is $H_\mA$-admissible.

Note that
$$\skp{x_j}{(P_{i_{j+1}}-P_{i_j})z_{i_j}}=\|(P_{i_{j+1}}-P_{i_j})z_{i_j}\|\skp{x_j}{x_j}\ge0,$$
as well as
$$\|\skp{x_j}{(P_{i_{j+1}}-P_{i_j})z_{i_j}}\|=\|(P_{i_{j+1}}-P_{i_j})z_{i_j}\|>\lambda-\varepsilon/2.$$
Therefore, by Lemma \ref{lema Marfi} there are states $\varphi_j$ such that $|\varphi_j(\skp{x_j}{(P_{i_{j+1}}-P_{i_j})z_{i_j}})|>\lambda-\varepsilon/2$.

For the seminorm $p_{X_0,\Phi_0}$, $\Phi_0=\{\va_1,\va_2,\ldots\}$, let $y_1,y_2,\ldots,y_l\in H_{\mA}$ be the corresponding $\lambda-\varepsilon$ net. One can find a number $j_0$ such that
\begin{eqnarray*}
\|(I-P_{i_j})y_k\|<\varepsilon/4\quad j\geq j_0,\quad k=1,\ldots,D
\end{eqnarray*}
and hence, for $j\geq j_0,\,k=1,2,\ldots,l$
\begin{eqnarray*}
\|\skp{y_k}{x_j}\|
&=&\|\skp{\|(P_{i_{j+1}}-P_{i_j})z_{i_j}\|^{-1}(P_{i_{j+1}}-P_{i_j})z_{i_j}}{(P_{i_{j+1}}-P_{i_j})y_k}\|\\
&\leq&\|(P_{i_{j+1}}-P_{i_j})y_k\|<\varepsilon/2.
\end{eqnarray*}
Then, for all $k=1,\ldots,l$ and $y=z_{i_j}$,
\begin{eqnarray*}
d_{X_0,\Phi_0}(y,y_k)&=&\sqrt{\sup\limits_m\sum_{i=m}^{\infty}|\varphi_k(\skp{y-y_k}{x_i})|^2}\\
&\geq&|\va_j(\skp{z_{i_j}-y_k}{x_j})|\geq|\va_j(\skp{z_{i_j}}{x_j})|-|\va_j(\skp{y_k}{x_j})|\\
&=&|\va_j(\skp{(P_{i_{j+1}}-P_{i_j})z_{i_j}}{x_j})|-|\va_j(\skp{y_k}{x_j})|>\lambda-\frac{\varepsilon}{2}-\frac{\varepsilon}{2}\\
&=&\lambda-\varepsilon.
\end{eqnarray*}
This contradicts the choice of $y_1,\ldots,y_l$. Hence, the assumption that there is a $\lambda-\varepsilon$ net can not hold. Thus, $\lambda(E)\leq\chi^*(E)$.
\end{proof}

\begin{remark}
    Following \cite[Theorem 4.7]{Keckic2}, one might prove the inequality $\lambda(E)\le\sqrt{\|E\|I^*(E)}$. However, due to the previous theorem, it is trivial. Indeed, $\lambda(E)=\chi^*(E)\le\|E\|$, $I^*(E)$.
\end{remark}

\begin{corollary}
Let $E$ be a bounded set in standard Hilbert module $H_{\mA}$. There holds
$$\chi^*(E)=\lambda(E)\leq I^*(E)\leq\alpha^*(E)\leq2\chi^*(E)=2\lambda(E).$$
\end{corollary}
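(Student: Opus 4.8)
The plan is to obtain this corollary by simply concatenating two results already established in the paper, so the proof will be almost immediate. The central chain of inequalities
$$\chi^*(E)\le I^*(E)\le\alpha^*(E)\le 2\chi^*(E)$$
is precisely the content of Lemma \ref{MNCinequalities}, which in turn rests on the elementary comparison of Hausdorff, Kuratowski, and Istr\u{a}\c{t}escu measures on each individual pseudometric space (Proposition \ref{osobine MNC}(\ref{KnownIneq})) passed through the supremum defining the starred quantities. Nothing new needs to be proved for the middle three terms; I would cite Lemma \ref{MNCinequalities} directly.

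The only remaining ingredient is the identification of the outer terms with $\lambda$. Here I would invoke Theorem \ref{thm3}, which asserts $\chi^*(E)=\lambda(E)$ for every bounded $E\subset H_\mA$. Substituting this equality into both ends of the chain above replaces the leftmost $\chi^*(E)$ by $\lambda(E)$ and the rightmost $2\chi^*(E)$ by $2\lambda(E)$, yielding exactly the claimed string
$$\chi^*(E)=\lambda(E)\le I^*(E)\le\alpha^*(E)\le 2\chi^*(E)=2\lambda(E).$$

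There is essentially no obstacle to overcome: the corollary is a bookkeeping statement that records the interaction between the topologically defined measures $I^*,\alpha^*$ and the topology-free measure $\lambda$, and all the substantive work has already been done in Theorem \ref{thm3} (the hard direction of which was the construction, via Lemma \ref{Lema1} and Lemma \ref{lema Marfi}, of an admissible system and states witnessing $\lambda(E)\le\chi^*(E)$) and in Lemma \ref{MNCinequalities}. Accordingly, I would keep the proof to a single sentence combining the two references, and I would not attempt any independent argument for the inequalities $\lambda(E)\le I^*(E)$ or $\alpha^*(E)\le 2\lambda(E)$, since these now follow formally rather than requiring the sharper analytic estimate $\lambda(E)\le\sqrt{\|E\|\,I^*(E)}$ of the preceding remark.
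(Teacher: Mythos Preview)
Your proposal is correct and matches the paper's own proof exactly: the paper also reduces the corollary to a direct combination of Lemma \ref{MNCinequalities} and Theorem \ref{thm3}. There is nothing to add or change.
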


\begin{proof}
Follows immediately from Lemma \ref{MNCinequalities} and Theorem \ref{thm3}.
\end{proof}

\begin{corollary}
    Let $B_1$ be the closed unit ball of $H_{\mA}$. Then $\chi(B_1)=1.$
\end{corollary}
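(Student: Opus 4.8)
The plan is to read this off directly from the main theorem together with the already-recorded value of $\lambda$ on the unit ball, so that essentially no new work is required. First I would note that Theorem \ref{thm3} gives $\chi^*(E)=\lambda(E)$ for \emph{every} norm bounded $E\subset H_\mA$, with no hypothesis beyond boundedness. Since the closed unit ball $B_1$ is norm bounded (indeed $\|B_1\|=1$), the theorem applies verbatim to $E=B_1$, yielding $\chi^*(B_1)=\lambda(B_1)$. (Here the claim is understood for the seminorm-indexed Hausdorff measure $\chi^*$ attached to the family \eqref{TroiSemiNorm}.)

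Next I would invoke Proposition \ref{osobine lambda}, item~(4), which states that $\lambda(B_1)=1$. Chaining the two equalities gives $\chi^*(B_1)=\lambda(B_1)=1$, which is exactly the assertion.

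The main point worth flagging is not an obstacle but rather where the content sits: the two halves of the equality come from genuinely different places. The upper bound $\chi^*(B_1)\le 1$ is elementary and is precisely Lemma \ref{UnitBall}, using only $p_\a(x)\le\|x\|$. The matching lower bound $\chi^*(B_1)\ge 1$ is the nontrivial direction, and it is supplied by Theorem \ref{thm3} (equivalently, by the admissibility construction of Lemma \ref{Lema1} together with the state-selection of Lemma \ref{lema Marfi}). If one wished to avoid citing the full strength of Theorem \ref{thm3}, the lower bound would have to be reproved by specializing its argument to $B_1$: for a suitable $H_\mA$-admissible pair $(X,\Phi)$ one would exhibit elements of $B_1$ whose pairwise seminorm-distances are arbitrarily close to $1$, so that no finite net of radius below $1$ can cover $B_1$. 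Since that is strictly weaker than what Theorem \ref{thm3} already delivers, the one-line derivation via $\chi^*(B_1)=\lambda(B_1)=1$ is the preferred route.
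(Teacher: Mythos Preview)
Your proof is correct and follows exactly the paper's own argument: the corollary is obtained immediately by combining Theorem \ref{thm3} ($\chi^*(B_1)=\lambda(B_1)$) with Proposition \ref{osobine lambda}(4) ($\lambda(B_1)=1$). The additional discussion you give about the upper and lower bounds is accurate but not needed for the proof itself.
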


\begin{proof}
Follows immediately from Proposition \ref{osobine lambda} and Theorem \ref{thm3}.
\end{proof}

\subsection{Arbitrary countably generated Hilbert module}

We want to extend our results from standard Hilbert module $H_\mA$ to an arbitrary countably generated Hilbert module $\mathcal M$ over $\mA$. As it is usual, we use the Kasparov stabilization theorem $\mathcal M\oplus H_\mA\cong H_\mA$.

Note that the measure of noncompactness $\lambda$ is defined via free finitely generated submodules which was suitable for the standard Hilbert module. For more general purpose we need to calculate $\lambda$ by finitely generated projective submodules.

\begin{lemma}
    Let $E$ be a bounded subset of $H_\mA$. Then
    $$\lambda(E)=\lim_{n\to\infty}\sup_{x\in E}\|x-P_nx\|=\sup_{M\in\mathcal P}d(E,M)=\sup_{M\in\mathcal P}\sup_{x\in E}d(x,M),
    $$
    where $P_n$ are defined by (\ref{PeeN}), and $\mathcal P$ denotes the set of all finitely generated projective submodules of $H_\mA$.
\end{lemma}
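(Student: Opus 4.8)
The first equality $\lambda(E)=\lim_{n\to\infty}\sup_{x\in E}\|x-P_nx\|$ is exactly Proposition~\ref{osobine lambda}(1), and the equality $\sup_{M\in\mathcal P}d(E,M)=\sup_{M\in\mathcal P}\sup_{x\in E}d(x,M)$ is immediate from $d(E,M)=\sup_{x\in E}d(x,M)$. So the genuine content is the passage from free to finitely generated projective submodules. Here one should first record that $d(E,\cdot)$ is \emph{antitone}: if $M_1\subseteq M_2$ then $d(x,M_2)\le d(x,M_1)$ for every $x$, hence $d(E,M_2)\le d(E,M_1)$. Since the zero module belongs to $\mathcal P$, the extremum over $\mathcal P$ which can coincide with $\lambda(E)$ is the infimum, and the plan is accordingly to establish $\lambda(E)=\inf_{M\in\mathcal P}d(E,M)$ (that is, to read the two inner quantities of the display as infima over $\mathcal P$).

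For $\inf_{M\in\mathcal P}d(E,M)\le\lambda(E)$ there is nothing to do beyond an inclusion: every free finitely generated submodule is projective, so $\mathcal F\subseteq\mathcal P$, and together with Proposition~\ref{osobine lambda}(1) this gives $\inf_{M\in\mathcal P}d(E,M)\le\inf_{M\in\mathcal F}d(E,M)=\lambda(E)$.

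The reverse inequality is the heart of the matter, and it suffices to show $\lambda(E)\le d(E,M)$ for a fixed but arbitrary $M\in\mathcal P$. The structural input I would use is that a finitely generated projective submodule $M\subseteq H_\mA$ is orthogonally complemented, say $M=\ran q$ for a self-adjoint projection $q\in B^a(H_\mA)$, and that such a $q$ is $\mA$-compact. Complementation gives $d(x,M)=\|(I-q)x\|$ (the nearest point of $M$ to $x$ being $qx$), so $d(E,M)=\sup_{x\in E}\|(I-q)x\|$. From $\mA$-compactness of $q$ one has $\|q-P_nq\|\to0$ as $n\to\infty$ (for a finite-rank $\Theta_{y,z}$ one checks $\|\Theta_{y,z}-P_n\Theta_{y,z}\|=\|\Theta_{y-P_ny,z}\|\to0$, and $q$ is a norm limit of such sums; an $\varepsilon/3$ argument finishes it). Now for each $x\in E$ write $(I-P_n)x=(I-P_n)(I-q)x+(I-P_n)qx$, whence $\|(I-P_n)x\|\le\|(I-q)x\|+\|(q-P_nq)x\|\le\|(I-q)x\|+\|q-P_nq\|\,\|E\|$. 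Taking $\sup_{x\in E}$ and then $n\to\infty$, boundedness of $E$ kills the last term and yields $\lambda(E)=\lim_{n}\sup_{x\in E}\|(I-P_n)x\|\le\sup_{x\in E}\|(I-q)x\|=d(E,M)$. Passing to the infimum over $M\in\mathcal P$ then gives $\lambda(E)\le\inf_{M\in\mathcal P}d(E,M)$, completing the chain of equalities.

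The main obstacle I anticipate is the structural input in the reverse inequality: confirming that an arbitrary finitely generated projective submodule of $H_\mA$ is the range of an $\mA$-compact self-adjoint projection $q$, and that $\|q-P_nq\|\to0$ for such $q$. These are standard facts about finitely generated projective (hence complemented, hence $\mA$-compactly projected) submodules, but they are exactly where the hypothesis ``finitely generated projective'' is used; once they are in place the approximation estimate is routine, and everything else reduces to the antitonicity of $d(E,\cdot)$ and the inclusion $\mathcal F\subseteq\mathcal P$.
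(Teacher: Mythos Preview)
Your proposal is correct and follows essentially the same line as the paper's proof. You correctly observe that the displayed $\sup_{M\in\mathcal P}$ is a typo for $\inf_{M\in\mathcal P}$; the paper's own argument confirms this (it chooses $L$ with $\sup_{x\in E}d(x,L)<\hat\lambda(E)+\varepsilon$, which only makes sense for an infimum). For the nontrivial inequality the paper does exactly what you propose: it invokes that the projection $P_L$ onto a finitely generated projective submodule is $\mA$-compact (citing \cite[Theorem 15.4.2]{Wegge}) and that $\|P_L(I-P_n)\|\to0$ (citing \cite[Proposition 2.2.1]{Manuilov}), then bounds $\|x-P_nx\|\le\|x-P_Lx\|+\|P_Lx-P_nP_Lx\|$ and lets $n\to\infty$---the same decomposition and limit you use, up to notation.
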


\begin{proof}
    The first equality was proved in \cite[Proposition 2.2]{Keckic2} and literally the same argument leads to the second. Since it is short we repeat it.

    Denote
    $$\hat\lambda(E)=\sup_{M\in\mathcal P}d(E,M).$$
    Since $P_nH_\mA$ is finitely generated and projective (and moreover free), we have $\hat\lambda(E)\le\sup_{x\in E}\|x-P_nx\|$ and hence $\hat\lambda(E)\le\lim_{n\to\infty}\sup_{x\in E}\|x-P_nx\|$.

    For the opposite inequality, let $\varepsilon>0$ be arbitrary. Then, there is a finitely generated projective module $L\subseteq H_\mA$ such that $\sup_{x\in E}d(x,L)<\hat\lambda(E)+\varepsilon$. By \cite[Theorem 15.4.2]{Wegge}, the corresponding projection $P_L$ is $\mA$-compact, and by \cite[Proposition 2.2.1]{Manuilov} $\|P_L(I-P_n)\|\to0$, as $n\to\infty$. Then
    \begin{multline*}\|x-P_nx\|=d(x,P_nH_\mA)\le\|x-P_nP_Lx\|\le\\\le\|x-P_Lx\|+\|P_Lx-P_nP_Lx\|\le\hat\lambda(E)+\varepsilon+\|E\|\,\|P_L-P_nP_L||,
    \end{multline*}
    and taking $\sup_{x\in E}$ and limit we obtain the desired inequality.
\end{proof}

We will use the previous characterization as a definition of measure of noncompactness $\lambda$ on an arbitrary countably generated Hilbert module.

\begin{definition}\label{DefLambda2}
    Let $\mathcal M$ be a countably generated Hilbert module over a $C^*$-algebra $\mA$, and let $E\subseteq\mathcal M$ be a bounded set. We define
    $$\lambda(E)=\inf_{L\in\mathcal P}\sup_{x\in E}d(x,L),$$
    where $\mathcal P$ denotes the set of all finitely generated projective submodules of $\mathcal M$.
\end{definition}

\begin{proposition}\label{LambdaGreaterModule}
    Let $\mathcal M_1\subseteq\mathcal M_2$ be two countably generated Hilbert modules over the same $C^*$-algebra $\mA$, such that $\mathcal M_1$ is complemented in $\mathcal M_2$, i.e.
    \begin{equation}\label{M_1compM_2}
    \mathcal M_2=\mathcal M_1\oplus\mathcal N
    \end{equation}
    for some $\mathcal N$ and let $E\subseteq \mathcal M_1$ be a bounded set. Consider two functions $\lambda_j(E)$, $j=1,2$, the measures of noncompactness regarding $E$ as a subset of $\mathcal M_j$. Then $\lambda_1(E)=\lambda_2(E)$.
\end{proposition}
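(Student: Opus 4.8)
The plan is to prove the two inequalities $\lambda_2(E)\le\lambda_1(E)$ and $\lambda_1(E)\le\lambda_2(E)$ separately, writing $\mathcal P(\mathcal M_j)$ for the finitely generated projective submodules of $\mathcal M_j$. The first inequality is immediate. If $L\in\mathcal P(\mathcal M_1)$, then $L$ is complemented in $\mathcal M_1$, and since $\mathcal M_1$ is complemented in $\mathcal M_2$ by (\ref{M_1compM_2}), $L$ is complemented in $\mathcal M_2$ as well; hence $\mathcal P(\mathcal M_1)\subseteq\mathcal P(\mathcal M_2)$. Moreover the orthogonal projection onto $L$ regarded inside $\mathcal M_2$ is $Q_L\oplus0$ on $\mathcal M_1\oplus\mathcal N$, so for $x\in\mathcal M_1$ it agrees with the projection computed inside $\mathcal M_1$. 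Using the best-approximation identity $d(x,L)=\|(I-Q_L)x\|$ (valid for any orthogonally complemented $L$), the distance $d(x,L)$ is the same in either ambient module, and taking the infimum over the smaller family $\mathcal P(\mathcal M_1)$ can only increase it, giving $\lambda_2(E)\le\lambda_1(E)$.

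For the reverse inequality I would first record that identity: if $L$ is finitely generated projective with orthogonal projection $Q_L$, then for each $x$ and every $y\in L$ one has $x-Q_Lx\perp L$, so $\skp{x-y}{x-y}\ge\skp{(I-Q_L)x}{(I-Q_L)x}$, whence $d(x,L)=\|(I-Q_L)x\|$. Now fix $\varepsilon>0$ and choose $L\in\mathcal P(\mathcal M_2)$ with $\sup_{x\in E}d(x,L)<\lambda_2(E)+\varepsilon$. Let $P\colon\mathcal M_2\to\mathcal M_1$ be the orthogonal projection furnished by (\ref{M_1compM_2}) and set $B=PQ_L\iota\in B^{a}(\mathcal M_1)$, where $\iota\colon\mathcal M_1\hookrightarrow\mathcal M_2$; as $Q_L$ is $\mA$-compact and the $\mA$-compact operators form an ideal, $B$ is $\mA$-compact. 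For $x\in E\subseteq\mathcal M_1$ we have $Px=x$ and $\|P\|\le1$, so
$$\|x-Bx\|=\|P(x-Q_Lx)\|\le\|(I-Q_L)x\|=d(x,L)<\lambda_2(E)+\varepsilon.$$

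It then remains to replace $B$ by an honest finitely generated projective submodule of $\mathcal M_1$. The bounded set $B(E)$ is the image of a bounded set under an $\mA$-compact operator, hence $\mA$-precompact, so $\lambda_1(B(E))=0$; granting this, there is $L'\in\mathcal P(\mathcal M_1)$ with $\sup_{x\in E}d(Bx,L')<\varepsilon$, and the triangle inequality gives $d(x,L')\le\|x-Bx\|+d(Bx,L')<\lambda_2(E)+2\varepsilon$ for all $x\in E$. Thus $\lambda_1(E)\le\sup_{x\in E}d(x,L')\le\lambda_2(E)+2\varepsilon$, and letting $\varepsilon\to0$ completes the argument.

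The main obstacle is exactly the assertion $\lambda_1(B(E))=0$, that is, the analogue of item (6) of Proposition \ref{osobine lambda} for the possibly nonstandard module $\mathcal M_1$: one must produce finitely generated projective submodules of $\mathcal M_1$ that uniformly approximate an $\mA$-compact image. I expect to handle this through the Kasparov stabilization $\mathcal M_1\oplus H_\mA\cong H_\mA$, which realizes $\mathcal M_1$ as a complemented submodule $P_{\mathcal M_1}H_\mA$ of the standard module. In $H_\mA$ the coordinate projections $P_n$ of (\ref{PeeN}) have finitely generated free range and satisfy $\|(I-P_n)K\|\to0$ for every $\mA$-compact $K$ (by taking adjoints in \cite[Proposition 2.2.1]{Manuilov}), so $B(E)$ is uniformly well approximated by the $P_nH_\mA$. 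The delicate point, and where complementedness is used essentially, is transferring this approximation back inside $\mathcal M_1$: the projected module $P_{\mathcal M_1}(P_nH_\mA)$ is finitely generated but need not be projective, so instead one must invoke that $\mathcal M_1$ is exhausted by finitely generated projective submodules (equivalently, that the $\mA$-compact operators on $\mathcal M_1$ admit an approximate unit consisting of projections), which supplies the required $L'$ and closes the only genuine gap in the scheme above.
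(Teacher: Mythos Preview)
Your route differs from the paper's and is considerably more circuitous. For the nontrivial inequality $\lambda_1(E)\le\lambda_2(E)$, the paper does not pass through $\mA$-compact operators at all: given $L\in\mathcal P(\mathcal M_2)$ with $\sup_{x\in E}d(x,L)<\lambda_2(E)+\varepsilon$, it takes $Q(L)$ itself (where $Q:\mathcal M_2\to\mathcal M_1$ is the orthogonal projection from (\ref{M_1compM_2})) as the approximating submodule inside $\mathcal M_1$. The paper argues that $Q(L)$ is finitely generated and projective (from $\mA^n\cong L\oplus L_1$ it derives $\mA^n\cong Q(L)\oplus(I-Q)(L)\oplus L_1$), and then a two-line orthogonality computation gives, for $x\in E\subseteq\mathcal M_1$ and $y\in L$, that $\skp{x-y}{x-y}\ge\skp{Q(x-y)}{Q(x-y)}$, hence $\|x-y\|\ge\|x-Qy\|\ge d(x,Q(L))$. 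Taking infimum over $y\in L$ and supremum over $x\in E$ yields $\lambda_1(E)\le\lambda_2(E)+\varepsilon$ in one stroke.

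Your detour via $B=PQ_L\iota$ and the claim $\lambda_1(B(E))=0$ is sound in outline, but the gap you flag is real and your proposed filling is essentially circular. To get $\lambda_1(B(E))=0$ you must exhibit finitely generated projective submodules of $\mathcal M_1$ approximating $B(E)$; after stabilizing you are led to $P_{\mathcal M_1}(P_nH_\mA)$, which is precisely a module of the form $Q(L')$ for free $L'$---the very object whose projectivity you declared doubtful. The appeal to an approximate unit of projections in $K(\mathcal M_1)$ is not available in general, so that escape route does not close the gap either. In short, the paper's argument confronts the projectivity of $Q(L)$ head-on and then finishes with a single distance estimate, whereas your scheme layers compactness, Proposition~\ref{osobine lambda}(6), and stabilization on top of what is, in the end, the same underlying question.
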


\begin{proof}
    Obviously, $\lambda_1(E)\ge\lambda_2(E)$ since in the latter the infimum is taken over a larger set. Conversely, let $\varepsilon>0$ be arbitrary. Then, there is a finitely generated projective $L\subseteq\mathcal M_2$ such that $\sup_{x\in E} d(x,L)<\lambda_2(E)+\varepsilon$.

    By (\ref{M_1compM_2}) there exists the projection $Q:\mathcal M_2\to\mathcal M_1$. Obviously, $Q(L)\subseteq\mathcal M_1$ is finitely generated. It is, also, projective, for if $\mA^n\cong L\oplus L_1$ for some $L_1$, then $\mA^n\cong Q(L)\oplus(I-Q)(L)\oplus L_1$.

    Now, let $x\in E\subseteq\mathcal M_1$ and $y\in L$. Then $x=Qx$ and therefore
    \begin{multline*}
    \skp{x-y}{x-y}=\skp{Q(x-y)-(I-Q)y}{Q(x-y)-(I-Q)y}=\\=\skp{Q(x-y)}{Q(x-y)}+\skp{(I-Q)y}{(I-Q)y}\ge\skp{Q(x-y)}{Q(x-y)}.
    \end{multline*}
    Hence
    $$\|x-y\|\ge\|Qx-Qy\|=\|x-Qy\|\ge d(x,Q(L)).$$
    Taking the infimum over all $y\in L$ we obtain $d(x,L)\ge d(x,Q(L))$, and taking the supremum over $x\in E$ we get
    $$\sup_{x\in E}d(x,L)\ge\sup_{x\in E}d(x,Q(L))\ge\lambda_1(E),$$
    which finally leads to $\lambda_2(E)\ge\lambda_1(E)$.
\end{proof}

\begin{proposition}\label{ChiGreaterModule}
    Let $\mathcal M_j$ $j=1,2$ be two countably generated modules and let $M_j^0$ be their submodules. Also, let $\chi$ be Hausdorff measure of noncompactness of Troitsky $(\mathcal M,\mathcal M^0)$ topology, where $\mathcal M=\mathcal M_1\oplus\mathcal M_2$ and $\mathcal M^0=\mathcal M_1^0\oplus\mathcal M_2^0$. Also, let $\chi_j$ be the corresponding Hausdorff measures of noncompactness related to $(\mathcal M_j;\mathcal M_j^0)$ topology.

    Denote by $p_j$ projection from $\mathcal M$ onto $\mathcal M_j$. If $E\subseteq\mathcal M$ is a bounded set, then
    \begin{equation}\label{ChiOplus}
    \max\{\chi_1(p_1E),\chi_2(p_2E)\}\le\chi(E)\le\chi_1(p_1E)+\chi_2(p_2E).
    \end{equation}
\end{proposition}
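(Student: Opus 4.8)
The plan is to reduce everything to a correspondence between the Troitsky seminorms of the direct sum $\mathcal M=\mathcal M_1\oplus\mathcal M_2$ and those of the summands, mediated by the projections $p_j$. Identifying $\mathcal M_j$ with its canonical copy in $\mathcal M$, so that $p_j\colon\mathcal M\to\mathcal M_j$ is the orthogonal projection and $x=p_1x+p_2x$, the \emph{decisive} first step is a two-way admissibility correspondence. On one hand, if $X=\{x_i\}$ is $\mathcal M^0$-admissible, then its image $p_1X=\{p_1x_i\}$ is $\mathcal M_1^0$-admissible: conditions (1)--(3) of admissibility for $p_1X$ follow by testing the defining (in)equalities of $X$ against vectors $(u,0)$, $u\in\mathcal M_1^0$, using $\skp{(u,0)}{x_i}=\skp{u}{p_1x_i}$ and $\|p_1x_i\|\le\|x_i\|\le1$. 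On the other hand, if $Y=\{y_i\}$ is $\mathcal M_1^0$-admissible, then $\tilde Y=\{(y_i,0)\}$ is $\mathcal M^0$-admissible, and one computes directly that $p_{\tilde Y,\Phi}(x)=p_{Y,\Phi}(p_1x)$ for every $x\in\mathcal M$, while for $u\in\mathcal M_1$ one has $p_{X,\Phi}(u)=p_{p_1X,\Phi}(u)$. Thus pulling a summand seminorm back by $p_1$ produces a genuine $(\mathcal M,\mathcal M^0)$-seminorm, and restricting an $(\mathcal M,\mathcal M^0)$-seminorm to $\mathcal M_1$ produces an $(\mathcal M_1,\mathcal M_1^0)$-seminorm.

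For the lower bound $\max\{\chi_1(p_1E),\chi_2(p_2E)\}\le\chi(E)$ I would argue as follows. Fix an $\mathcal M_1^0$-admissible $Y$ and states $\Phi$, and let $\tilde Y$ be its lift. Since $p_{\tilde Y,\Phi}(x)=p_{Y,\Phi}(p_1x)$, the pseudometric induced by $p_{\tilde Y,\Phi}$ on $\mathcal M$ is the pullback by $p_1$ of the one induced by $p_{Y,\Phi}$ on $\mathcal M_1$. A direct ball-chasing argument (project covering centres by $p_1$ for one inequality; lift centres $w_j$ to $(w_j,0)$ for the other) gives $\chi_{p_{\tilde Y,\Phi}}(E)=\chi_{p_{Y,\Phi}}(p_1E)$. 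As $\chi_{p_{\tilde Y,\Phi}}(E)\le\chi(E)$, taking the supremum over all $(Y,\Phi)$ yields $\chi_1(p_1E)\le\chi(E)$; the estimate for $\chi_2(p_2E)$ is symmetric.

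For the upper bound I would first prove $\chi(F)\le\chi_1(F)$ for every bounded $F\subseteq\mathcal M_1$. Indeed, for any $\mathcal M^0$-admissible $X$ and states $\Phi$, the equality $p_{X,\Phi}(u)=p_{p_1X,\Phi}(u)$ for $u\in\mathcal M_1$ shows that the inclusion $\mathcal M_1\hookrightarrow\mathcal M$ is isometric for the pseudometrics $d_{p_{p_1X,\Phi}}$ and $d_{p_{X,\Phi}}$; covering $F$ inside $\mathcal M_1$ and keeping the same centres then gives $\chi_{p_{X,\Phi}}(F)\le\chi_{p_{p_1X,\Phi}}(F)\le\chi_1(F)$, the last step because $p_1X$ is $\mathcal M_1^0$-admissible. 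Taking the supremum over $(X,\Phi)$ gives $\chi(F)\le\chi_1(F)$, and applied to $F=p_1E$ (and symmetrically to $p_2E$) this yields $\chi(p_jE)\le\chi_j(p_jE)$. Finally, from $x=p_1x+p_2x$ one has $E\subseteq p_1E+p_2E$, so subadditivity of $\chi$ (Proposition \ref{osobine MNC}, extended to the supremum as in the proof of Lemma \ref{sub}) gives $\chi(E)\le\chi(p_1E)+\chi(p_2E)\le\chi_1(p_1E)+\chi_2(p_2E)$, which is the right-hand inequality.

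The main obstacle is the admissibility correspondence of the first paragraph, in particular the claim that the projected system $p_1X$ remains $\mathcal M_1^0$-admissible; this is where the hypothesis $\mathcal M^0=\mathcal M_1^0\oplus\mathcal M_2^0$ and the compatibility of the direct-sum inner product with the projections are essential, and it must be verified against all three defining conditions. The remaining difficulties are the two pseudometric computations for $\chi$: both are routine ball-chasing arguments, but one must be careful that covering centres are allowed to range over the ambient module, so that projecting or lifting centres by $p_1$ is legitimate and preserves the radius.
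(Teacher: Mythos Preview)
Your proof is correct and follows essentially the same approach as the paper: both rest on the two-way admissibility correspondence (projecting an $\mathcal M^0$-admissible system $X$ to an $\mathcal M_j^0$-admissible $p_jX$, and lifting an $\mathcal M_1^0$-admissible $Y$ to $J_1Y$) together with the resulting seminorm identities $p_{X,\Phi}|_{\mathcal M_j}=p_{p_jX,\Phi}$ and $p_{J_1Y,\Phi}=p_{Y,\Phi}\circ p_1$. The only cosmetic difference is in the upper bound, where the paper combines a $\delta_1$-net for $p_1E$ and a $\delta_2$-net for $p_2E$ directly into a $(\delta_1+\delta_2)$-net for $E$, while you factor through the intermediate estimate $\chi(p_jE)\le\chi_j(p_jE)$ and then invoke subadditivity of $\chi^*$; both routes use the same ingredients.
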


\begin{proof}
    The proof can be obtained by a slight modification of the proof of \cite[Lemma 2.15]{Troi}.

    Denote $J_j=p_j^*:\mathcal M_j\to\mathcal M$ the corresponding inclusions. Let $X=\{x_i\}$ be an arbitrary sequence in $\mathcal M$ admissible for $\mathcal M^0$ and let $\Phi=\{\varphi_i\}$ be an arbitrary sequence of states on $\mA$. It is easy to see that $X^{(j)}=\{p_jx_i\}$ is admissible for $\mathcal M_j^0$, for details see the proof of \cite[Lemma 2.15]{Troi}, as well as (for $j=1,2$)
    $$p_{X,\Phi}(J_jy)=\sqrt{\sup_{k\ge1}\sum_{i=k}^\infty|\varphi_k(\skp{J_jy}{x_i})|^2}=\sqrt{\sup_{k\ge1}\sum_{i=k}^\infty|\varphi_k(\skp{y}{p_jx_i})|^2}=p_{X^{(j)},\Phi}(y).$$

    Let $\delta_1>\chi_1(p_1E)$, $\delta_2>\chi_2(p_2E)$ be arbitrary. Then, there is a finite $\delta_1$ net for $p_1E$ in the seminorm $p_{X^{(1)}},\Phi$, say $z_1,\dots,z_m\in\mathcal M_1$ and a finite $\delta_2$ net for $p_2E$ in the seminorm $p_{X^{(2)},\Phi}$, say $w_1,\dots,w_r\in\mathcal M_2$.

    We claim that $J_1z_i+J_2w_s$, $1\le i\le m$, $1\le s\le r$ make a finite $\delta_1+\delta_2$ net for $E$. Indeed, if $y\in E$, then $y=J_1p_1y+J_2p_2y$. Also, there is an $1\le i\le k$ such that $p_{X^{(1)},\Phi}(p_1y-z_i)<\delta_1$ as well as an $1\le s\le r$ such that $p_{X^{(2)},\Phi}(p_2y-w_s)<\delta_2$. Then
    \begin{multline*}
    p_{X,\Phi}(y-J_1z_k-J_2w_s)\le p_{X,\Phi}(J_1(p_1y-z_k))+p_{X,\Phi}(J_2(p_2y-w_s))=\\
    =p_{X^{(1)},\Phi}(p_1y-z_k)+p_{X^{(2)},\Phi}(p_2y-w_s)\le\delta_1+\delta_2.
    \end{multline*}

    Thus, $\chi(E)$ does not exceed $\delta_1+\delta_2$, and the second inequality in (\ref{ChiOplus}) is proved.

    For the first inequality, let $\Phi=\{\varphi_i\}$ and let $X=\{x_i\}$ be an arbitray sequence in $\mathcal M_1$ admissible for $\mathcal M_1^0$. Then it is easy to see that $J_1X=\{J_1x_i\}$ is admissible for $\mathcal M^0$ as well as
    $$p_{X,\Phi}(p_1y)=\sqrt{\sup_{k\ge1}\sum_{i=k}^\infty|\varphi_k(\skp{p_1y}{x_i})|^2}=\sqrt{\sup_{k\ge1}\sum_{i=k}^\infty|\varphi_k(\skp{y}{J_1x_i})|^2}=p_{J_1X,\Phi}(y).$$

    If $\delta>\chi(E)$ is arbitrary, then there is a finite $\delta$ net in the seminorm $p_{J_1X,\Phi}$ for $E$, say $z_1,\dots,z_k$. Consequently, for an arbitrary $y\in p_1E$, there is an $1\le i\le k$ such that $p_{J_1X,\Phi}(J_1y-z_i)<\delta$. Then
    $$p_{X,\Phi}(y-p_1z_i)=p_{X,\Phi}(p_1(J_1y-z_i))=p_{J_1X,\Phi}(J_1y-z_i)<\delta.$$
    Thus $\chi_1(E)\le\delta$, which proves the first inequality.
\end{proof}

\begin{corollary}\label{ChiCorollary}
    Let $\mathcal M_1\subseteq\mathcal M_2$ be two countably generated Hilbert modules over $\mA$, such that $\mathcal M_1$ is complemented in $\mathcal M_2$, and let $E\subseteq\mathcal M_1$ be an arbitrary bounded set.

    Then $\chi^*_1(E)=\chi_2^*(E)$, where $\chi_j^*$ denote the Hausdorff measure of noncompactnes for Troitsky's topology with respect to $\mathcal M_j$.
\end{corollary}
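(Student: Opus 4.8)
The plan is to deduce this immediately from Proposition \ref{ChiGreaterModule} by applying it to the complementation $\mathcal M_2=\mathcal M_1\oplus\mathcal N$ provided by the hypothesis. Concretely, in the notation of that proposition I would take its ``$\mathcal M_1$'' to be our $\mathcal M_1$ and its ``$\mathcal M_2$'' to be the complement $\mathcal N$, so that its ``$\mathcal M$'' is $\mathcal M_1\oplus\mathcal N=\mathcal M_2$. Choosing the defining submodules to be the full modules throughout (that is, $\mathcal M_1^0=\mathcal M_1$, $\mathcal N^0=\mathcal N$, and hence $\mathcal M^0=\mathcal M_1\oplus\mathcal N=\mathcal M_2$) guarantees that the measure ``$\chi$'' of the proposition coincides with our $\chi_2^*$, its ``$\chi_1$'' with our $\chi_1^*$, and its remaining constituent, which I will write $\chi_{\mathcal N}^*$, with the Hausdorff measure on $\mathcal N$.

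With this identification Proposition \ref{ChiGreaterModule} yields
\[
\max\{\chi_1^*(p_1 E),\ \chi_{\mathcal N}^*(p_{\mathcal N}E)\}\le\chi_2^*(E)\le\chi_1^*(p_1 E)+\chi_{\mathcal N}^*(p_{\mathcal N}E),
\]
where $p_1\colon\mathcal M_2\to\mathcal M_1$ and $p_{\mathcal N}\colon\mathcal M_2\to\mathcal N$ are the projections attached to the decomposition. The key observation is now that, because $E\subseteq\mathcal M_1$, these projections act trivially on $E$: each $x\in E$ decomposes as $x=x\oplus 0$, whence $p_1 E=E$ and $p_{\mathcal N}E=\{0\}$.

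It then remains only to note that a singleton has vanishing measure of noncompactness: $\{0\}$ is totally bounded with respect to every seminorm, so $\chi_{\mathcal N}^*(\{0\})=0$ by Remark \ref{DrugiSmer} together with Proposition \ref{IvanAxiome}. Substituting $p_1 E=E$ and $\chi_{\mathcal N}^*(p_{\mathcal N}E)=0$ into the displayed chain collapses both the upper and the lower bound to $\chi_1^*(E)$, giving $\chi_1^*(E)\le\chi_2^*(E)\le\chi_1^*(E)$ and hence the asserted equality.

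I do not anticipate any genuine difficulty here, as the statement is essentially the specialization of Proposition \ref{ChiGreaterModule} to a set living entirely in one summand. The only point requiring care is the bookkeeping of the defining submodules $\mathcal M_j^0$, so that the three Troitsky topologies involved are the diagonal ones $(\mathcal M_j,\mathcal M_j)$ matching the definition of $\chi_j^*$ in the statement; once this is pinned down, the argument is purely formal.
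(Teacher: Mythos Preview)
Your proposal is correct and follows essentially the same approach as the paper: both apply Proposition~\ref{ChiGreaterModule} to the decomposition $\mathcal M_2=\mathcal M_1\oplus\mathcal N$ with $\mathcal M_j^0=\mathcal M_j$, observe that the projection of $E$ onto the complement is trivial so its Hausdorff measure vanishes, and then read off the equality from (\ref{ChiOplus}). Your write-up is in fact more explicit than the paper's, which simply says ``after some changes in notations'' and leaves the bookkeeping of the defining submodules to the reader.
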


\begin{proof}
    After some changes in notations we can apply the previous proposition with $\mathcal M_j^0=\mathcal M_j$ and $E\subseteq\mathcal M_1$. Then $p_2E$ is trivial, implying $\chi_2^*(p_2E)=0$. Thus the equation (\ref{ChiOplus}) becomes
    $$\chi_1^*(E)\le\chi^*(E)\le\chi_1^*(E)+0.$$
\end{proof}

\begin{theorem}
    Let $\mathcal M$ be a countably generated Hilbert $C^*$-module over a $C^*$-algebra $\mA$. Let $\chi$ denote the Hausdorff measure of noncompactness with respect to Troitsky's topology $\tau$, and let $\lambda$ denote the $\mA$ measure of noncompactness defined in Definition \ref{DefLambda2}.

    If $E\subseteq\mathcal M$ is a bounded set, then $\chi(E)=\lambda(E).$
\end{theorem}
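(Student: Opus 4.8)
The plan is to reduce the general statement to the case of the standard Hilbert module $H_\mA$, which is already settled by Theorem \ref{thm3}, by combining the Kasparov stabilization theorem with the two comparison results Proposition \ref{LambdaGreaterModule} and Corollary \ref{ChiCorollary}. The whole point of having proved those two comparison lemmas is that, once $\mathcal M$ is realized as a complemented submodule of $H_\mA$, passing between $\mathcal M$ and $H_\mA$ changes neither $\lambda$ nor $\chi$.

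First I would invoke the Kasparov stabilization theorem $\mathcal M\oplus H_\mA\cong H_\mA$ to realize $\mathcal M$ as a complemented submodule of $H_\mA$. Composing the inclusion $x\mapsto(x,0)$ of $\mathcal M$ into $\mathcal M\oplus H_\mA$ with the stabilization unitary yields an isometric, adjointable module embedding $V\colon\mathcal M\to H_\mA$ whose image $\mathcal M_1:=V(\mathcal M)$ is complemented in $H_\mA$, the complement being the image of $0\oplus H_\mA$. I would then observe that both $\lambda$ and $\chi$ are invariant under the module isomorphism $V\colon\mathcal M\to\mathcal M_1$. Indeed, $V$ preserves the inner product, hence carries finitely generated projective submodules to finitely generated projective submodules and preserves all distances, so $\lambda(E)=\lambda(V(E))$ computed in $\mathcal M_1$; likewise $V$ sends $\mathcal M$-admissible systems and states to $\mathcal M_1$-admissible systems and states while preserving the seminorms (\ref{TroiSemiNorm}), so $\chi(E)=\chi(V(E))$ computed in $\mathcal M_1$. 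Thus it suffices to prove $\chi(F)=\lambda(F)$ for the bounded set $F:=V(E)\subseteq\mathcal M_1$, where $\mathcal M_1$ is a complemented submodule of $H_\mA$.

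With this reduction in hand the argument is a short chain of equalities. Applying Corollary \ref{ChiCorollary} with $\mathcal M_1\subseteq H_\mA$ gives $\chi_{\mathcal M_1}(F)=\chi_{H_\mA}(F)$; Theorem \ref{thm3} gives $\chi_{H_\mA}(F)=\lambda_{H_\mA}(F)$; and Proposition \ref{LambdaGreaterModule} gives $\lambda_{H_\mA}(F)=\lambda_{\mathcal M_1}(F)$. Concatenating these three identities and transporting back through the isomorphism $V$ yields $\chi(E)=\lambda(E)$ for every bounded $E\subseteq\mathcal M$.

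I expect the main subtlety to lie not in the final chain, which is purely formal, but in the reduction step. One must check that the measure $\chi$ in the statement is the Hausdorff measure for the $(\mathcal M,\mathcal M)$-topology, i.e.\ taken with $\mathcal M^0=\mathcal M$, so that Corollary \ref{ChiCorollary}, stated for $\mathcal M_j^0=\mathcal M_j$, applies verbatim after setting $\mathcal M_2^0=H_\mA$. The most delicate part is the claim that $\lambda$ and $\chi$ are genuinely invariant under the stabilization unitary; here I would verify explicitly that conditions 1)--3) defining an admissible system and the defining formula for $p_{X,\Phi}$ in (\ref{TroiSemiNorm}) involve only the inner product and the norm, both of which $V$ preserves, and that $\lambda$ as given in Definition \ref{DefLambda2} refers only to finitely generated projective submodules and norm distances, which are likewise preserved.
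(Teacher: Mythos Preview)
Your proposal is correct and follows essentially the same route as the paper: invoke Kasparov stabilization to realize $\mathcal M$ as a complemented submodule of $H_\mA$, then chain together Corollary~\ref{ChiCorollary}, Theorem~\ref{thm3}, and Proposition~\ref{LambdaGreaterModule}. The only difference is that you spell out the invariance of $\lambda$ and $\chi$ under the stabilization isomorphism $V$ explicitly, whereas the paper simply writes $\mathcal M_2\cong H_\mA$ and works with the identification tacitly.
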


\begin{proof}
    By Kasparov stabilization theorem, we have
    $$\mathcal M\oplus H_{\mA}\cong H_{\mA}=:\mathcal M_2.$$

    Thus $\mathcal M$ is complemented in $M_2$ and $M_2\cong H_{\mA}$. Denote by $\chi^*_2(E)$ and $\lambda_2(E)$ the corresponding measures of noncompactness with respect to $\mathcal M_2\cong H_{\mA}$. By Theorem \ref{thm3} we have $\lambda_2(E)=\chi^*_2(E)$.

    On the other hand, by Proposition \ref{LambdaGreaterModule} we have $\lambda(E)=\lambda_2(E)$. Finally, apply Corollary \ref{ChiCorollary} to obtain $\chi^*(E)=\chi_2^*(E)$.
\end{proof}

\subsection{Measures of noncompactness of operators}

\begin{definition}
Let $\mathcal M$ be a countably generated Hilbert module over a $C^*$-algebra $\mA$ and let $T\in B^a(\mathcal M$ be an adjointable operator. The functions $\alpha_o^*,\chi_o^*,I_o^*,\lambda_o^*: B^a(\mathcal M)\to[0,+\infty)$ defined by
$$\chi_o^*(T)=\chi^*(T(B_1)),\quad \alpha_o^*(T)=\alpha^*(T(B_1)),$$
$$I_o^*(T)=I^*(T(B_1)),\quad \lambda_o(T)=\lambda(T(B_1)),$$
where $B_1$ is the closed unit ball in $\mathcal M$ are called, respectively, Hausdorff, Kuratowski, Istr\u{a}\c{t}escu and $\mA$ measure of noncompactness of the operator $T$.
\end{definition}

\begin{proposition}
Let $\mathcal M$ be a countably generated Hilbert module over a $C^*$-algebra $\mA$ and let $T\in B^a(\mathcal M)$. Then
$$\chi_0^*(T)=\inf\{k\geq0\mid\chi^*(T(E))\leq k\chi^*(M)\,\,\mbox{for each bounded set } E\}.$$
\end{proposition}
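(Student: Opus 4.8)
The plan is to prove the equality by two inequalities. Throughout, write $c$ for the infimum on the right-hand side and $s=\chi^*(T(B_1))=\chi_o^*(T)$ (I read the right-hand side as $\chi^*(T(E))\le k\chi^*(E)$, the symbol $M$ being a misprint for $E$). First note that the defining set is a genuine subset of $[0,\infty)$ and $s<\infty$: since $T(B_1)\subseteq\|T\|B_1$ and Troitsky's seminorms satisfy $p_{X,\Phi}(x)\le\|x\|$, the argument of Lemma \ref{UnitBall} gives $\chi^*(B_1)\le1$, whence $s\le\|T\|\chi^*(B_1)\le\|T\|$.

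The easy inequality is $s\le c$. For every admissible $k$ in the defining set, testing with the (bounded) set $E=B_1$ gives $s=\chi^*(T(B_1))\le k\,\chi^*(B_1)\le k$; taking the infimum over all such $k$ yields $s\le c$. The substance is the reverse inequality $c\le s$, for which it suffices to check that $s$ is itself admissible, i.e.\ that $\chi^*(T(E))\le s\,\chi^*(E)$ for every bounded $E$. Fix such an $E$ and set $r=\chi^*(E)$. Since $\chi^*=\lambda$ (established above) and by Definition \ref{DefLambda2}, for every $\varepsilon>0$ there is a finitely generated projective submodule $L$ with $\sup_{x\in E}d(x,L)<r+\varepsilon$. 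Choosing for each $x\in E$ a point $\ell_x\in L$ with $\|x-\ell_x\|<r+\varepsilon$ gives $\|\ell_x\|\le\|E\|+r+\varepsilon=:R$, so that
$$E\subseteq (L\cap R B_1)+(r+\varepsilon)B_1.$$
Applying $T$ and using that $\chi^*$ is subadditive (Proposition \ref{osobine MNC}) and positively homogeneous, I obtain
$$\chi^*(T(E))\le\chi^*\bigl(T(L\cap R B_1)\bigr)+(r+\varepsilon)\,\chi^*(T(B_1))=\chi^*\bigl(T(L\cap R B_1)\bigr)+(r+\varepsilon)\,s.$$

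The crux is that the first summand vanishes. By \cite[Theorem 15.4.2]{Wegge} the orthogonal projection $P$ onto $L$ is $\mA$-compact, and every $\ell\in L\cap RB_1$ satisfies $\ell=P\ell=R\,P(\ell/R)\in R\,P(B_1)$, so $T(L\cap RB_1)\subseteq R\,(TP)(B_1)$. As $TP$ is $\mA$-compact (a bounded adjointable operator times an $\mA$-compact one), Theorem \ref{Troi teorema} shows that $(TP)(B_1)$ is totally bounded for $\tau$, hence $\chi^*((TP)(B_1))=0$ by Proposition \ref{IvanAxiome}(\ref{MNC*TotBound}) together with Remark \ref{DrugiSmer}. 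Monotonicity and homogeneity of $\chi^*$ then give $\chi^*(T(L\cap RB_1))\le R\,\chi^*((TP)(B_1))=0$. Letting $\varepsilon\to0$ yields $\chi^*(T(E))\le s\,\chi^*(E)$, as required, and therefore $c\le s$.

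I expect the one delicate point to be precisely the vanishing of $\chi^*\bigl(T(L\cap RB_1)\bigr)$: it is here that finite generation and projectivity of $L$ are essential, since they let one realise $L$ as the range of an $\mA$-compact projection and invoke Troitsky's identification of $\mA$-compactness with total boundedness. The surrounding steps---the reduction to testing $E=B_1$, the covering $E\subseteq (L\cap RB_1)+(r+\varepsilon)B_1$ extracted from $\chi^*=\lambda$, and the subadditivity, homogeneity, monotonicity and bound $\chi^*(B_1)\le1$ of $\chi^*$---are routine and already recorded in Propositions \ref{osobine MNC} and \ref{IvanAxiome} and Lemma \ref{UnitBall}.
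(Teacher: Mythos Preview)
Your proof is correct, and it takes a genuinely different route from the paper's in the hard direction $c\le s$. The paper mimics the classical Banach-space argument of \cite[Theorem~2.25]{Rakocevic}: for a fixed seminorm $p_\alpha$ it picks a finite $\eta$-net $z_1,\dots,z_p$ for $E$ and a finite $k$-net $y_1,\dots,y_l$ for $T(B_1)$ (with $\eta>\chi^*(E)$, $k>\chi_o^*(T)$), and argues that the points $Tz_i+\eta y_j$ form an $\eta k$-net for $T(E)$; since $p_\alpha$ was arbitrary this gives $\chi^*(T(E))\le\eta k$. You instead exploit the identity $\chi^*=\lambda$ just proved: Definition~\ref{DefLambda2} supplies a finitely generated projective $L$ with $E\subseteq(L\cap RB_1)+(r+\varepsilon)B_1$, and then Theorem~\ref{Troi teorema} applied to the $\mA$-compact operator $TP$ kills the first summand.

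What each approach buys: the paper's argument is independent of the main theorem $\chi^*=\lambda$, so in principle it stands on its own. Your argument, by contrast, works entirely with the \emph{norm} unit ball $B_1$ and thereby sidesteps a delicate point in the paper's net argument---namely that the $\eta$-net for $E$ is a net in the seminorm $p_\alpha$, so the covering balls are $p_\alpha$-balls rather than translates of $\eta B_1$, and the step $T(B(z_i;\eta))=Tz_i+\eta T(B_1)$ is not literally correct as written. Your decomposition via $\lambda$ makes the passage from $E$ to $T(E)$ completely transparent, at the modest cost of invoking the paper's main result and the Wegge-Olsen fact that projections onto finitely generated projective submodules are $\mA$-compact.
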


\begin{proof}
The proof is, essentially, the same as in the case of metric spaces (see \cite[Theorem 2.25]{Rakocevic}.)

Indeed, let $k>0$ be such that
\begin{equation}\label{ChiOgranicenje}
    \chi^*(T(E))\le k\chi^*(E),
\end{equation}
holds for all bounded $E\subseteq\mathcal M$. For $E=B_1$ we obtain $\chi_0(T)=\chi^*(T(B_1))\le k\chi(B_1)=k$. Thus the infimum of such $k$ is greater or equal to $\chi_0(T)$.

For the other inequality, let $E$ be an arbitrary bounded set, let $k>\chi_0(T)$, and let $\eta>\chi^*(E)$. For any seminorm $p_\alpha$ there are two finite nets, the first $y_1$, $y_2$, $\dots$, $y_l$ makes a finite $k$ net for $B_1$, and the other $z_1$, $z_2$, $\dots$, $z_p$ makes a finite $\eta$ net for $E$. If $B(a;\delta)$ denote the ball with center at $a$ and radius $\delta$, then $E\subseteq\cup_{i=1}^pB(z_i;\eta)$ and hence
$$T(E)\subseteq\bigcup_{i=1}^pT(B(z_i;\eta))=\bigcup_{i=1}^p(Tz_i+\eta T(B_1)).$$
On the other hand, $T(B_1)\subseteq\cup_{j=1}^lB(y_j;k)$ and therefore
$$T(E)\subseteq\bigcup_{i=1}^p\left(Tz_i+\eta\bigcup_{j=1}^lB(y_j;k)\right)=\bigcup_{i=1}^p\bigcup_{j=1}^lB(Tz_i+\eta y_j;\eta k).$$
Thus $\chi^*(T(E))$ does not exceed $\eta k$, wich can be arbitrarily close to $k\chi^*(E)$.
\end{proof}

\begin{proposition}
Let $\mathcal{A}$ be an arbitrary $C^*$-algebra, let $T,S\in B^a(H_{\mathcal{A}})$ and let $\mu$ stands for any of MNCs $\alpha,\chi,I$. Then
\begin{itemize}
    \item[a)] All $\alpha_o^*,\chi_o^*$ and $I_o^*$ are subadditive and positive homogeneous, i.e. there holds
    $$\mu_o^*(T+S)\leq\mu_o^*(T)+\mu_o^*(S),\quad\mu_o(cT)=c\mu^*(T)\quad\mbox{for all } c>0.$$
    \item[b)] The functions $\alpha_o^*,\chi_o^*, I_o^*$ and $\lambda_o$ are equivalent to each other, that is,
    $$\chi_o^*(T)\leq I_o(T)\leq\alpha_o^*(T)\leq2\chi_o^*(T),\quad \chi_o^*(T)=\lambda_o(T).$$
    \item[c)] $\chi_o^*(T)\leq\|T\|$ and $\alpha_o^*(T),I_o^*(T)\leq2\|T\|$.
    \item[d)] Operator $T$ is $\mA$-compact iff $\lambda_0(T)=0$ iff $\mu_o^*(T)=0$.
    \item[e)] $\mu_o^*(T+K)=\mu_o^*(T)$, as well as $\lambda_o(T+K)=\lambda_o(T)$ for all $\mA$-compact operators $K$.
    \item[f)] There holds
$$\chi_o^*(T)=\lambda_o(T)\leq I_o^*(T)\leq\alpha_o^*(T)\leq 2\chi_o^*(T).$$
    \end{itemize}
\end{proposition}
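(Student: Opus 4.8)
The plan is to obtain every item by transporting, to the single set $T(B_1)$, the corresponding property of the set-level measures of noncompactness established in the previous sections. Two elementary structural facts make this essentially mechanical: the inclusion $(T+S)(B_1)\subseteq T(B_1)+S(B_1)$ and the identity $(cT)(B_1)=c\,T(B_1)$ for $c>0$, together with the observation that each Troitsky seminorm $p_{X,\Phi}$ is homogeneous, so that the induced pseudometrics satisfy $d_{X,\Phi}(cx,cy)=c\,d_{X,\Phi}(x,y)$ for $c>0$.

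For (a) I would first record that each $\mu_d$ with $\mu\in\{\chi,\alpha,I\}$ is positively homogeneous on a seminorm-induced pseudometric space: scaling by $c>0$ an $\varepsilon$-net (for $\chi$), a partition into pieces of diameter $<\varepsilon$ (for $\alpha$), or an $\varepsilon$-separated set (for $I$) turns it into the corresponding object for $cE$ with parameter $c\varepsilon$, whence $\mu_d(cE)=c\,\mu_d(E)$; taking the supremum over all $d_{X,\Phi}$ gives $\mu^*(cE)=c\,\mu^*(E)$. Subadditivity of $\mu^*$ is already available (Proposition \ref{osobine MNC} together with the definition of $\mu^*$). Combining these with the two set identities above yields $\mu_o^*(T+S)\le\mu_o^*(T)+\mu_o^*(S)$ and $\mu_o^*(cT)=c\,\mu_o^*(T)$.

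Items (b) and (f) require no new work: evaluating Lemma \ref{MNCinequalities} and Theorem \ref{thm3} at the set $E=T(B_1)$ gives directly $\chi_o^*(T)\le I_o^*(T)\le\alpha_o^*(T)\le2\chi_o^*(T)$ and $\chi_o^*(T)=\lambda_o(T)$, and (f) is merely these two statements written as a single chain. For (c), the containment $T(B_1)\subseteq\|T\|B_1$ together with monotonicity and positive homogeneity of $\chi^*$ and Lemma \ref{UnitBall} gives $\chi_o^*(T)\le\|T\|\,\chi^*(B_1)\le\|T\|$, and the bounds on $\alpha_o^*$ and $I_o^*$ follow from (b). For (d), Theorem \ref{Troi teorema} identifies $\mA$-compactness of $T$ with total boundedness of $T(B_1)$ in $\tau$, while Proposition \ref{IvanAxiome}(\ref{MNC*TotBound}) and Remark \ref{DrugiSmer} identify total boundedness with $\mu_o^*(T)=0$; the equivalence of all the stated conditions is then immediate from (b), since $\chi_o^*(T)=\lambda_o(T)$ and $\chi_o^*(T)=0$ forces $\alpha_o^*(T)=I_o^*(T)=0$. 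Finally, (e) is the usual perturbation argument: as $K$ and $-K$ are both $\mA$-compact, (d) gives $\mu_o^*(\pm K)=0=\lambda_o(\pm K)$, and subadditivity applied to $T+K$ and then to $(T+K)+(-K)$ pins $\mu_o^*(T+K)=\mu_o^*(T)$, and likewise $\lambda_o(T+K)=\lambda_o(T)$.

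Since every item reduces to a property established earlier, I expect no serious obstacle. The only point demanding a genuine, if brief, argument is the positive homogeneity of $\mu^*$ invoked in (a): it is not among the listed properties of $\mu^*$ and must be verified directly from the homogeneity of the defining seminorms, as sketched above.
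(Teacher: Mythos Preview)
Your proposal is correct and follows essentially the same approach as the paper: the paper's proof simply states that parts (a), (b), (c), (f) follow from the properties of $\alpha$, $\chi$, $I$ and $\lambda$, that (d) follows from (b) and Theorem~\ref{Troi teorema}, and that (e) follows from (d) and those same properties. Your write-up is a faithful and more detailed expansion of exactly this outline, including the one small point the paper glosses over, namely the positive homogeneity of $\mu^*$ needed in (a).
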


\begin{proof}
Parts a), b), c), and f)  follow from the properties of  $\alpha,\chi, I$ and $\lambda$.
Part d) follows from part b) and Theorem \ref{Troi teorema}.
Parts e) follows from part d) and the properties of  $\alpha,\chi, I$ and $\lambda$.
\end{proof}

\end{document}